\newcommand{\tikzAngleOfLine}{\tikz@AngleOfLine}
  \def\tikz@AngleOfLine(#1)(#2)#3{%
  \pgfmathanglebetweenpoints{%
    \pgfpointanchor{#1}{center}}{%
    \pgfpointanchor{#2}{center}}
  \pgfmathsetmacro{#3}{\pgfmathresult}%
  }
\newcommand{\drawTriangleA}{
  \begin{tikzpicture}[scale=1.25]
  \coordinate [label=left:${\begin{bmatrix}0\\0\end{bmatrix}}$] (A) at (-1.5cm,-1.cm);
  \coordinate [label=right:${\begin{bmatrix}A\\0\end{bmatrix}}$] (C) at (1.5cm,-1.0cm);
  \coordinate [label=right:${\begin{bmatrix}A\\B\end{bmatrix}}$] (B) at (1.5cm,1.0cm);
  \filldraw[black] (-1.5,-1) circle (2pt) ;
    \filldraw[black] (1.5,1) circle (2pt) ;
        \filldraw[black] (1.5,-1) circle (2pt) ;
  \draw[red] (A) -- node[above,rotate=34] {${(H'\!H=A'\!A + B'\!B)^{}}$} (B) --  (C);
    \draw[red] (A) -- node[below,rotate=34] {${H}$} (B) -- (C) ;
   
         \draw[blue]  (C) -- node[below] {$A$} (A);
            \draw[green]  (C) --  node[right] {$B$}(B) ;
   \draw (1.25cm,-1.0cm) rectangle (1.5cm,-0.75cm);
  \tikzAngleOfLine(A)(B){\AngleStart}
  \tikzAngleOfLine(A)(C){\AngleEnd}
  \draw[thick,double] (A)+(\AngleStart:0.3 cm) arc (\AngleStart:\AngleEnd:0.3 cm);
  \node at ($(A)+({(\AngleStart+\AngleEnd)/2}:0.5 cm)$) {$\theta$};
    \coordinate [label=center:    Triangle Trigonometry  ] (E) at (-.3cm,1.4cm);   
  \end{tikzpicture}
}
\newcommand{\drawTriangleB}{
  \begin{tikzpicture}[scale=1.25]
  \coordinate [label=left:${\begin{bmatrix}0\\0\end{bmatrix}}$] (A) at (-1.5cm,-1.cm);
  \coordinate [label=right:${\begin{bmatrix}A\\0\end{bmatrix}}$] (C) at (1.5cm,-1.0cm);
  \coordinate [label=right:${\begin{bmatrix}A\\B\end{bmatrix}}$] (B) at (1.5cm,1.0cm);
  \coordinate [label=left:${\begin{bmatrix}0\\B\end{bmatrix}}$] (D) at (-1.5cm,1.0cm);
  \coordinate [] (E) at (-.75cm,-.5cm);

  \node at  (-1.1,-1.2) {\bf cos($\theta$)};
  \node at  (-.3,-.75) {\bf sin($\theta$)};

  \node[red,rotate=20] at (-1.15,-.6){${}_1$};

  \path[draw=red,solid,line width=1.0mm,fill=red, preaction={-triangle 90,thin,red,draw,shorten >=-1mm}] (A) -- (E);
  \path[draw=blue,solid,line width=1.0mm,fill=black, preaction={-triangle 90,thin,blue,draw,shorten >=-1mm}] (A) -- (-.75,-1);
  \path[draw=green,solid,line width=1.0mm,fill=black, preaction={-triangle 90,thin,green,draw,shorten >=-1mm}] (A) -- (-1.5,-.5);

  \filldraw[black] (-1.5,-1) circle (2pt) ;
  \path[draw=green,solid,line width=0.1mm,fill=black, preaction={-triangle 90,thin,red,draw,shorten >=-1mm}] (A) -- (B);
  \path[draw=blue,solid,line width=0.1mm,fill=black, preaction={-triangle 90,thin,draw,shorten >=-1mm}] (A) -- (D);
  \path[draw=red,solid,line width=0.1mm,fill=black, preaction={-triangle 90,thin,draw,shorten >=-1mm}] (A) -- (C);
  \draw[red] (1,.6666) -- node[above left,rotate=34] {$ {\hspace*{.5in} (H'\!H=A'\!A + B'\!B)}$} (B) ;
  \draw[red] (A) -- node[below,rotate=34] {\hspace*{.3in} ${H}$} (B) ;
 \draw[dash dot]  (B) --  (C) ;
                                                   
  \draw[blue]  (C) -- node[below] {\hspace*{.5in} $A$} (A);
  \draw[green] (A) -- node[left] {$B$} (D);
  \draw [dash dot] (D) --  (B);
  \draw [dash dot]  (-1.5,-.5) --  (-.75,-.5);
  \draw [blue,dash dot]  (-.75,-1) --  (-.75,-.5);

  \tikzAngleOfLine(A)(B){\AngleStart}
  \tikzAngleOfLine(A)(C){\AngleEnd}
  \draw[thick,double] (A)+(\AngleStart:0.3 cm) arc (\AngleStart:\AngleEnd:0.3 cm);
  \node at ($(A)+({(\AngleStart+\AngleEnd)/2}:0.5 cm)$) {$\theta$};
                         
     \coordinate [label=center:     Components  ] (E) at (0cm,1.4cm);                     
  \end{tikzpicture}
}
\newcommand{\drawTikonov}{
  \begin{tikzpicture}[scale=1.25]
  \coordinate (A) at (-1.5cm,-1.cm);
  \coordinate [label=right:$0 s$] (C) at (1.5cm,-1.0cm);
  \coordinate [label=right:$\lambda s$] (B) at (1.5cm,0.5cm);
  \coordinate [label=right:$s$] (D) at (1.5cm,2.0cm);

  \draw[red, very thick] (A) -- node[below,rotate=27] {${\sqrt{c^2 + \lambda^2s^2}}$} (B);
  \draw (C) -- (B) --  (D);
  \draw[olive, very thick] (D) -- node[above,rotate=50]{$1$}  (A);
  \draw (B) -- (C);
  \draw[blue, very thick] (C) -- node[below] {$c$} (A);
  \tikzAngleOfLine(A)(B){\AngleStart}
  \tikzAngleOfLine(A)(C){\AngleEnd}
  \draw[thick,double] (A)+(\AngleStart:0.45 cm) arc (\AngleStart:\AngleEnd:0.45 cm);
  \node at ($(A)+({(\AngleStart+\AngleEnd)/2}:0.7 cm)$) {$\theta_\lambda$};  

    \filldraw[black] (-1.5,-1) circle (2pt) ;
  \filldraw[red] (1.5,0.5) circle (2pt) ;
  \filldraw[olive] (1.5,2) circle (2pt) ;
  \filldraw[blue] (1.5,-1) circle (2pt) ;

  \end{tikzpicture}
}
\newcommand{\drawTikonovGeneral}{
  \begin{tikzpicture}[scale=1.25]
  \coordinate (A) at (-1.5cm,-1.cm);
  \coordinate [label=below:$0 L$] (C) at (1.5cm,-1.0cm);
  \coordinate [label=right:$\lambda L$] (B) at (1.5cm,0.5cm);
  \coordinate [label=right:$L$] (D) at (1.5cm,2.0cm);
  \coordinate [label=right:$C_\lambda U^Tb$] (E) at (2.1cm,0.82cm) ; 
  \coordinate [label=right:$b$] (F) at (3.5cm,-1.0cm) ; 

  \draw[red, very thick]  (A) -- node[above,rotate=27] {${H_\lambda}$} (B);
 \draw (C) -- (B) -- (D) ;
  \draw[olive, very thick] (D) -- (A) ;
   \draw[dashed] (B) -- node[below,rotate=26] {} (E) -- node[below] {} (F) -- node[below]{} (C);
  \draw (B) -- node[right] {} (C) -- node[below] {$H_0=C_\lambda H_\lambda$} (A);  
  \draw[blue,very thick] (A)--(C);
  \tikzAngleOfLine(A)(B){\AngleStart}
  \tikzAngleOfLine(A)(C){\AngleEnd}
  \draw[thick,double] (A)+(\AngleStart:0.45 cm) arc (\AngleStart:\AngleEnd:0.45 cm);
  \node at ($(A)+({(\AngleStart+\AngleEnd)/2}:0.7 cm)$) {$\theta_\lambda$};  

    \filldraw[black] (-1.5,-1) circle (2pt) ;
  \filldraw[red] (1.5,0.5) circle (2pt) ;
  \filldraw[olive] (1.5,2) circle (2pt) ;
  \filldraw[blue] (1.5,-1) circle (2pt) ;
  \filldraw[black] (2.1,0.82) circle (2pt) ;
  \filldraw[black] (3.5,-1) circle (2pt) ;

  \end{tikzpicture}
}
\newcommand{\drawGraph}{
\begin{tikzpicture}[scale=1.5]

\coordinate [label=left:$v_i$] (A) at (-3, 3) ;
\coordinate [label=right:$u_i$] (C) at (0, 0) ;
\coordinate [label=left:$0$] (B) at (-3, 0);
\coordinate [label=right: \raisebox{.2in}{\mbox{$  \! \! [c_iu_i;s_iv_i]$}}] (D) at (-0.8786, 2.1213);
\coordinate [label=below:$c_iu_i$] (E) at (-0.8786, 0);
\coordinate [label=left:$s_iv_i$] (F) at (-3, 2.1213);

\draw[red, very thick, dotted] (C) arc (0:90:3cm);
\draw[blue, very thick] (B) --node[below]{{$X$}} (C); 
\draw[green, very thick] (B) --node[left]{{$Y$}} (A);
\draw[red, very thick] (B) --(D);

\draw[dotted, thick] (D) -- (E) ;
\draw[dotted, thick] (D) -- (F) ;

\tikzAngleOfLine(B)(C){\AngleStart}
\tikzAngleOfLine(B)(D){\AngleEnd}

\node at ($(B)+({(\AngleStart+\AngleEnd)/2}:.3cm)$) {\raisebox{.05in}{\footnotesize $\ \ \ \theta_i$}};  

\end{tikzpicture}
}
\newcommand{\drawAnglePlot}{
  \begin{tikzpicture}[scale=1.5]

\coordinate [label=above:all $B$] (A) at (-3, 3) ;
\coordinate [label=right:\  $\! \! \! \! \!$ all $A$] (C) at (0, 0) ;
\coordinate [label=right:\ $\! \!  \! \!$ equally {$A,B$}] (D) at (-0.8786, 2.1213);
\coordinate [label=right:\  $\! \!$mostly $A$] (E) at (-0.22836, 1.14805) ;
\coordinate [label=right:\  $\! \! \! \! \!$  \mbox{\raisebox{,1in}{mostly $B$}}] (F) at (-1.8519, 2.7716) ;

\draw[red, very thick, dotted] (C) arc (0:90:3cm) node at (20:1){};;

\draw[blue,very thick] (B) --node[below]{{$X$}} (C); 
\draw[green,very thick] (B) --node[left]{{$Y$}} (A);
\draw[red,very thick] (B) -- (D);
\draw[red,very thick] (B) -- (E);
\draw[red,very thick] (B) -- (F);

\tikzAngleOfLine(B)(C){\AngleStart}
\tikzAngleOfLine(B)(E){\AngleEnd}

\node at ($(B)+({(\AngleStart+\AngleEnd)/2}:.3cm)$) {\raisebox{.05in}{\footnotesize $\ \ \ \  \ \ \ \theta_i$}};

  \filldraw[blue] (A) circle (1pt) ;
  \draw[black, ultra thick] (A) circle (1pt) ;
  \filldraw[blue] (C) circle (1pt) ;
  \draw[black, ultra thick] (C) circle (1pt) ;
  \filldraw[blue] (D) circle (1pt) ;
  \draw[black, ultra thick] (D) circle (1pt) ;
  \filldraw[blue] (E) circle (1pt) ;
  \draw[black, ultra thick] (E) circle (1pt) ;
  \filldraw[blue] (F) circle (1pt) ;
  \draw[black, ultra thick] (F) circle (1pt) ;


\end{tikzpicture}
}
\newcommand{\R}{\mathbb{R}}
\newcommand{\G}{\begin{bmatrix}UC\\VS\end{bmatrix}}
\title{The GSVD:  Where are the ellipses?,\\ Matrix Trigonometry, and more}
\author{Alan Edelman\thanks{Department of Mathematics, MIT, Cambridge, MA 
  (\email{edelman@math.mit.edu}).}
\and Yuyang Wang\thanks{AWS AI Labs, East Palo Alto, CA 
  (\email{yuyawang@amazon.com}). Work done prior joined Amazon.}}
\definecolor{shadecolor}{gray}{.92}
\definecolor{incolor}{rgb}{0,0,.7}
\definecolor{outcolor}{rgb}{.65,0,0}
\definecolor{syntaxcolor}{rgb}{.65,0,0}
\newcounter{jcounter}
\newenvironment{jinput}[1][]
{\small\noindent\begin{minipage}[t]{0in}\vskip2ex\hspace*{\fill}
\end{minipage}
\begin{minipage}[t]{.9\textwidth}\vskip-3ex   \begin{shaded}    } 
{\end{shaded}\end{minipage}\vskip 2ex\par}
\begin{document}
\maketitle

\begin{abstract}

This paper provides an advanced mathematical theory of the Generalized Singular Value Decomposition (GSVD)  and its applications.
We explore the geometry of the GSVD providing a long sought for  picture which includes a horizontal and a vertical multiaxis.
We  further propose that the GSVD provides natural coordinates for the Grassmann manifold.
This paper proves a theorem showing how the finite generalized singular values do or do not relate to the singular values of $AB^\dagger$.
 
We then turn to  applications, arguing that this geometrical theory is natural for understanding existing applications and recognizing opportunities
for new applications. In particular the generalized singular vectors play a direct and as natural a mathematical role for certain applications  as
the singular vectors do for the SVD.  In the same way that experts on the SVD often prefer not to cast SVD problems as eigenproblems, we propose that the GSVD, often cast as a generalized eigenproblem, is perhaps best cast in its natural setting.
 
We illustrate this theoretical approach and the natural multiaxes (with labels from technical domains)  in the context of applications where the GSVD arises:  Tikhonov regularization (unregularized vs regularized), Genome Reconstruction (humans vs yeast), Signal Processing (signal vs noise), and statistical analysis such as Analysis of variance (ANOVA) and discriminant analysis (between clusters vs within clusters.)  With the aid of our ellipse figure, we encourage the labelling of the natural multiaxes in any GSVD problem.
\end{abstract}

\begin{keywords}
GSVD, SVD, ellipse, CS Decomposition, Tikhonov Regularization 
\end{keywords}

\begin{AMS}
65F22, 15A18, 15A23
\end{AMS}

\pagestyle{myheadings}
\thispagestyle{plain}

\section{Introduction}
\subsection{Prelude}
If $a\in\R^{m_1}$ and $b\in\R^{m_2}$ are  two vectors,
then the block vector  equation in $\R^{m_1+m_2}$:
\[ \begin{bmatrix}
a \\ b
\end{bmatrix}
=\begin{bmatrix}
a \\ 
0
\end{bmatrix}
+
\begin{bmatrix}
0 \\ b
\end{bmatrix}
\]
may be thought of geometrically as a hypotenuse vector decomposed as the sum of two legs of a right triangle. 
If $h=\sqrt{\|a\|^2+\|b\|^2}\ne 0$
is the length of this hypotenuse and $u=a/\|a\|,v=b/\|b\|$ are the unit direction vectors for $a,b$ then we can write
\[ \begin{bmatrix}
a \\ b
\end{bmatrix}
=\begin{bmatrix}
uc \\ 
vs
\end{bmatrix}
h,
\]
where $c$ and $s$ are the cosine and sine of the corresponding angles, namely $c=\|a\|/h$ and $s=\|b\|/h$. This is ordinary planar trigonometry of a right triangle.

For notational convenience, we will sometimes use a semicolon (``;'') to denote the stacking (or vertical concatenation) of vectors and matrices, so that
\[[a;b]=[a;0]+[0;b].\]
We note that 
$[uc;vs]
$
is a unit vector in the direction 
$[a;b].
$
The cotangent $\sigma=c/s$ is a slope which provides a measure of whether the vector is primarily in the ``$a$'' (or top) direction, or the ``$b$,'' or a mix depending on whether $\sigma$ is large, small, or in between.

The GSVD extends the above ideas to matrices.

\subsection{The GSVD}
This paper provides a new approach and understanding
of the generalized SVD 
(GSVD)~\cite{paige1981towards,van1976generalizing,chu1995rank} of two matrices $A\in\R^{m_1, n}, B\in\R^{m_2, n}$.
Generalizing the introductory paragraphs, the GSVD may be understood in the context of a generalized Pythagorean theorem with
\[
\begin{bmatrix}
A\\
B
\end{bmatrix}
= 
\begin{bmatrix}
A\\
0
\end{bmatrix} + 
\begin{bmatrix}
0\\
B
\end{bmatrix}.
\]

 We take as our definition of a GSVD,
  a decomposition of $[A;B]$ with the form
\begin{equation}
\label{eqn:GSVD}
\left[
\begin{array}{c}
A \\ 
B
\end{array}\right] = 
\left[\begin{array}{c}
UC \\ 
VS
\end{array}\right] H,
\end{equation}
where $U,V$ are square orthogonal in $\R^{m_1 , m_1}$.
$\R^{m_2 , m_2}$; $C,S$ are 1-diagonal (see Figure  \ref{fig:CS} ) such that $C'C+S'S=I_r$, and $H$ has full row rank $r$ where $r$ denotes rank($[A;B]$).  The remaining dimensions are implied, namely $C,S$ are in $\R^{m_1 , r}$,
$\R^{m_2 , r},$ and $H$ is in $\R^{r , n}$.

The SVD is so widely used that applications need not be listed.  Historically this was not always the case.  Fields such as biology, economics, and computer science could be observed learning about the SVD one-by-one
with great impact. Perhaps a kind of folklore notion is that the SVD applies any time an array  $A$ needs to be quickly compressed to the main information out, or whenever $AA'$ was lurking. We would love to foster a world where the GSVD finds applications one-by-one in many fields.  Perhaps the new folklore is that the GSVD applies when two arrays with a common dimension need to be quickly compressed or whenever two matrices $AA'$ and $BB'$ are lurking.  Of course both the SVD and GSVD underly more.

Some selected applications of the GSVD include oriented energy analysis \cite{callaerts1989signal,callaerts1990comparison,chu2003qr,de1988mathematical,de1988oriented,vandewalle2003generalized},
(here the GSVD is sometimes called by the more descriptive name QSVD for ``quotient'' SVD),  
Tikhonov regularization \cite{hansen1989regularization,dykes2014simplified}, Linear Discriminant Analysis \cite{park2005nonlinear,howland2003structure}, and more recently in microarray analysis \cite{alter2003generalized}.
A review from 1992 and discussion of algorithms may be found in \cite{un1992csd}.

As a point of mathematical taste, many textbooks today still treat SVDs as a byproduct of exposition on eigenvalues.  This is unfortunate, as most of the time considerations of $AA'$ or $A'\! A$ create unnecessary mathematical baggage best abandoned.  The SVD is mature enough to live its own life separate from the symmetric eigenvalue problem.  Taking this notion one step further, the GSVD deserves to live separately from generalized eigenvalue problems or the SVD.  When a GSVD lurks, it is recommended to abandon old fashioned language  and see the true GSVD construction in full
mature light.  We take this approach in a number of examples in this paper.

\subsection{A ``GH'' decomposition}
\label{GH}

To clarify and streamline our view of the roles of the pieces of the GSVD, we propose that the GSVD be considered a GH decomposition:
\[\left[
\begin{array}{c}
A \\ 
B
\end{array}\right] 
= GH, \]
where $G=[UC;VS]$ (for Grassmann or geometric) denotes the information in the $r$-dimensional hyperplane representing the column space of $[A;B]$.
Specifically the columns of $G$ are a natural orthonormal basis for that hyperplane in $\R^{m_1+m_2}$, and the columns of $H$ are the coordinates of the columns
of $[A;B]$ in that basis.
Of course the $QR$ decomposition of $[A;B]$ has exactly the same properties, with one important difference: the $Q$ is not uniquely defined by the hyperplane,
while in the GSVD, the choice  is more or less canonical.

We further feel that the factorization into the two matrices $G$ and $H$ emphasizes the outer product rank $r$  form:
\begin{equation}
\label{gheq}
\left[
\begin{array}{c}
A \\ 
B
\end{array}\right] 
= 
\sum_{i=1}^r \left[ i^{\mbox{th}} \mbox{ column of }\G\right]\left[i^{\mbox{th}} \mbox{ row of } H\right],
\end{equation}
which can be readily missed in the long form.

In analogy with the SVD or Non-negative Matrix Factorization (NMF)~\cite{lee_learning_1999},  one might consider a simultaneous rank reducing method where only the $k$ rows of $H$ with largest norm are kept.

In particular if we multiply $[A;B]$ on the right by   $H^\dagger I_{r,k}^{} I_{r,k}' H$, where $I_{r,k}$ is the first $k$ columns of the $r \times r$ identity,
we obtain a rank reduced $[A;B]$:
\begin{equation*}
\begin{split}
\left[
\begin{array}{c}
A \\ 
B
\end{array}\right]
& \approx    \left( \G I_{r,k}^{} \right)  \left(  I_{r,k}' H  \right)\\ 
&= \sum_{i=1}^k \left[ i^{\mbox{th}} \mbox{ column of }\G\right]\left[i^{\mbox{th}} \mbox{ row of } H\right].
\end{split}
\end{equation*}

 We remark that  $H^\dagger I_{r,k}^{} I_{r,k}' H$ is an oblique projector when $H$ is square non-singular, and an orthogonal projector when $H$ is orthogonal.

\subsection{\texorpdfstring{More details about $U,V,C,S,H$}{}}
The matrices $U,V,C,S,H$ deserve more detailed discussion, as may be found in Appendix~\ref{sec:appendix}.

To help guide the reader, we offer a table of bases for the fundamental subspaces that appear in the GSVD.  It is helpful to keep in mind
that the columns of $C$ and $S$ are leftward looking towards the orthogonal $U$ and $V$ matrices in the GSVD factorization, while
the rows of $C$ and $S$ are rightward looking towards the full row rank $H$ in the GSVD factorization.\\

\noindent
\begin{tabular}{m{3.8cm}m{8.2cm}}
\toprule
Fundamental Spaces & Basis (with Link to $C,S$) \\ \midrule
Column Spaces of $A,B$: & Columns of $U,V$ corresponding to non-zero cols of $C,S$ \\
Left-Null Spaces of $A,B$: & Columns of $U,V$ corresponding to zero cols of $C,S$  \\
Row Space of $[A;B]$: & Rows of $H$ \\
Row Spaces of $A,B$: & Rows of $H$ corresponding to non-zero rows of $C,S$ \\
Null Spaces of $A,B$: & Columns of $H^\dagger$ corresponding to zero columns of $C,S$  \\ & \hspace*{1in} + common null space (if $r<n$)  \\
Gen Eigenvector Spaces: &  Columns of $H^\dagger$  (for the problem $\det(A'A-\lambda B'B)=0$)\\
Common Null Space: & Null space of $H$  (Also see \ref{orthtri} for an RQ drilldown) \\
\bottomrule
\end{tabular}\\

It is useful to point out that the common nullspace of $A$ and $B$ is killed by $H$, i.e., if $Ax=0$ and $Bx=0$ then $Hx=0$.
A vector that is in only one of the nullspaces is not killed by $H$, but $Hx$ is killed by 0 columns in $C$ or $S$ respectively.

Let  $r_a = \text{rank}(A), r_b = \text{rank}(B), r=\text{rank}[A;B]$.
Table  \ref{tab:cs} shows the structure of $C$ and $S$.  A very common case has $r=n$ in which case the sizes of $C,S$
 match that of $A,B$.

\begin{table}[htp]
\begin{center}
    \begin{tabular}{l|c|c}
    \toprule
    \textsc{Property of $C$ and $S$} & \textsc{$C$} & \textsc{$S$}\\
     
  \midrule

         total  \# columns  &  \multicolumn{2}{c}{$r$}\\   \hline

 \# zero columns in $S$    (left columns):   & \multicolumn{2}{l}{$r - r_b   = \ \ \ \  \  \ $\# $\{ c_i=1 \} = \# \{ s_i=0 \} $ } \\
 \# non-zero columns  (middle columns): & \multicolumn{2}{l}{$r_a + r_b - r = \ \ \ \ \  \#\{ 0<c_i,s_i<1 \}$ } \\
 \# zero columns in $C$ (right columns): &  \multicolumn{2}{l}{$r - r_a  = \ \ \ \ \ $\# $\{ c_i=0 \} = \# \{ s_i=1 \} $ }  \\ \midrule
   total   \# rows & $m_1$=\ \# rows $A$  & $m_2$=\ \# rows $B$ \\ \midrule
\# non-zero rows  &$r_a \le m_1 $ & $r_b \le m_2$ \\
\# zero rows &$m_1-r_a$ & $m_2-r_b$ \\
          \bottomrule
    \end{tabular}
\end{center} 
\label{tab:cs}
\caption{The $C$ and $S$ matrices are naturally simultaneously partitioned into three block columns such that
the number of columns $r = (r-r_b) + (r_a+r_b-r)+(r-r_a)$, in left to right order.  The row sizes conform to $A$ and $B$ which means
that we add rows of zeros to $C,S$ or possibly delete some of the zero cosines/sines to achieve a row count of $m_1,m_2$.
The number of non-degenerate angles (not 0 nor $\pi/2$) is the middle number $ (r_a+r_b-r)$.
}
\end{table}

\begin{figure}[H]
\begin{center}
\includegraphics[width=.7\textwidth]{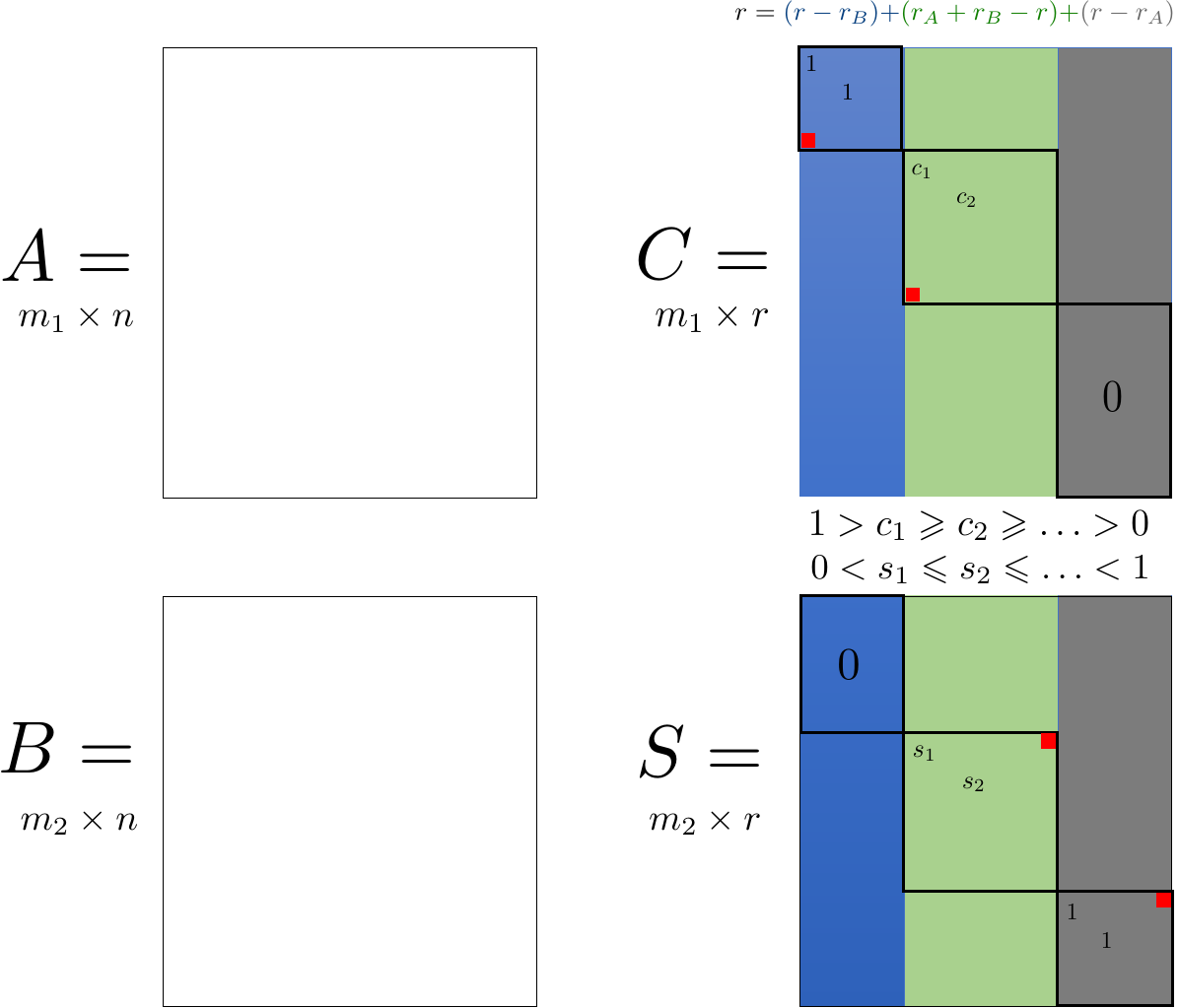}
\end{center}
\caption{Visualization of the structure of the  $C$ and $S$ matrices (whose sizes are that of $A$ and $B$). Red squares denote square blocks. We prefer the matrix diagonal orderings consistent with the cosine and sine functions on $[0,\pi/2]$,
where cosine (sine) decreases (increases) from 1 to 0  (0 to 1, respectively).}
\label{fig:CS}
\end{figure}

\subsection{Summary}
This paper contains a number of insights and results about the GSVD:

\begin{itemize}
\item We present an ellipse picture of the GSVD, which requires four dimensions to get a good feel for the general case (Section~\ref{sec:ellipse}).
\item The GSVD generalizes planar trigonometry to matrix trigonometry (Section~\ref{sec:main}).
\item We consider $[UC;VS]$ as natural coordinates for $r$ dimensional hyperplanes (the Grassmann manifold) in $\R^m$ given that $m=m_1+m_2$. We use the Grassmann manifold coordinates to clarify the link between the CS decomposition and the GSVD (other authors have observed vaguely that they are closely related). We view the $H$ matrix as the change of coordinates from canonical coordinates $[UC;VS]$ to the specifics of $[A;B]$ (Section~\ref{sec:cs}). 
\item We discuss the link between the GSVD and the principal angles between subspaces (Section~\ref{sec:pa}), and related ``energy portraits'' (Section~\ref{sec:energy}).
\item We prove a theorem relating GSVD$(A,B)$ and SVD$(AB^\dagger)$. They are not generally identical (Section~\ref{sec:thm}).
\item We revisit applications in the geometric context, and interpret the GSVD as a multi-dimensional slope and connect applications (Section~\ref{sec:applications}).
\end{itemize}

\paragraph{Notation}  For $i=1,\dots,r$,  let $u_i$ denote the normalized $i$-th column of $UC$ if $c_i \ne 0$, or else define $u_i=0$.
Similarly, let $v_i$ denote the normalized $i$-th column of $VS$ if $s_i \ne 0$, or else define $v_i=0$.
This notation conveniently avoids issues of different sizes and conventions.  For example, $U$ or $V$ may have fewer than $r$ columns.
Details of the placement of the $c_i$ and $s_i$ appear in Figure~\ref{fig:CS}. Suffice it to say for now that $u_i$ is the $i$-th column of the $U$ matrix when $c_i>0$,
and $v_i$ may be found in the $k$-th column of the $V$ matrix when $S_{ki}=s_i>0$.  The indirection in $V$ is admittedly unfortunate, but in all cases, the  non-zero $v_i$ by convention
are left to right contiguous columns of $V$ that may either start from the left, or end at the right, but in many situations $v_i$ is not in the $i$-th column. We use $A^\dagger$ to denote the pseudo-inverse of $A.$ The ``slash'' and ``backslash'' are defined as $A\backslash B := A^\dagger B,$ and $A/B := AB^\dagger.$ We also overload the notation $\text{GSVD}(A, B)$ to denote the generalized singular values of $(A, B),$ while $\text{SVD}(A)$ means the singular values of $A$. 

\section{Where are The Ellipses?}
\label{sec:ellipse}
The SVD ellipse picture for a matrix $A$ (Figure~\ref{fig:SVD}) is a very familiar visual for the action of $A$ on the unit ball. We are not aware of any ellipse pictures in the literature nor even a notion that a natural ellipse picture exists for the GSVD or even the CSD (CS Decomposition)~\cite{golub2012matrix}. We believe that the lack of a geometric view of the GSVD is part of the reason that the GSVD is not as widely understood or as widely used as it should be.  

\begin{figure}[H]
\begin{center}
\includegraphics[width=.7\textwidth]{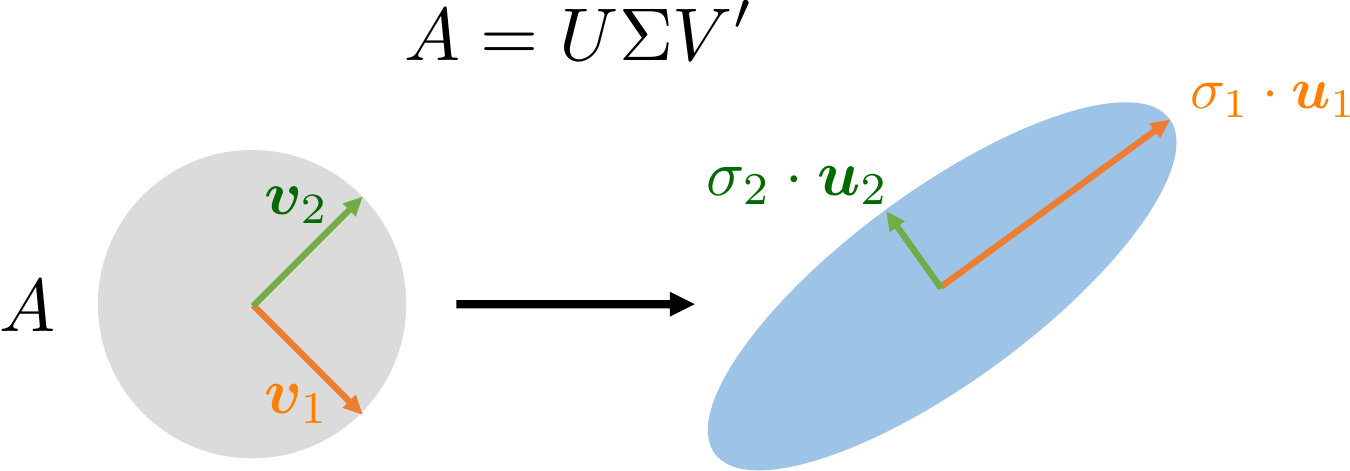}
\end{center}
\caption{Familiar SVD visual showing singular vectors and singular values of a matrix $A$ through the action of $A$ on the unit ball.  }
\label{fig:SVD}
\end{figure}

Regarding an ellipse picture, one might blame some sort of human inability to perceive higher dimensions as a complication, but we show that this is not really the case in Figure~\ref{fig:elip}.

\definecolor{mygreen}{RGB}{1, 100, 1}

\begin{figure}[htp]
\begin{center}
\includegraphics[width=\textwidth]{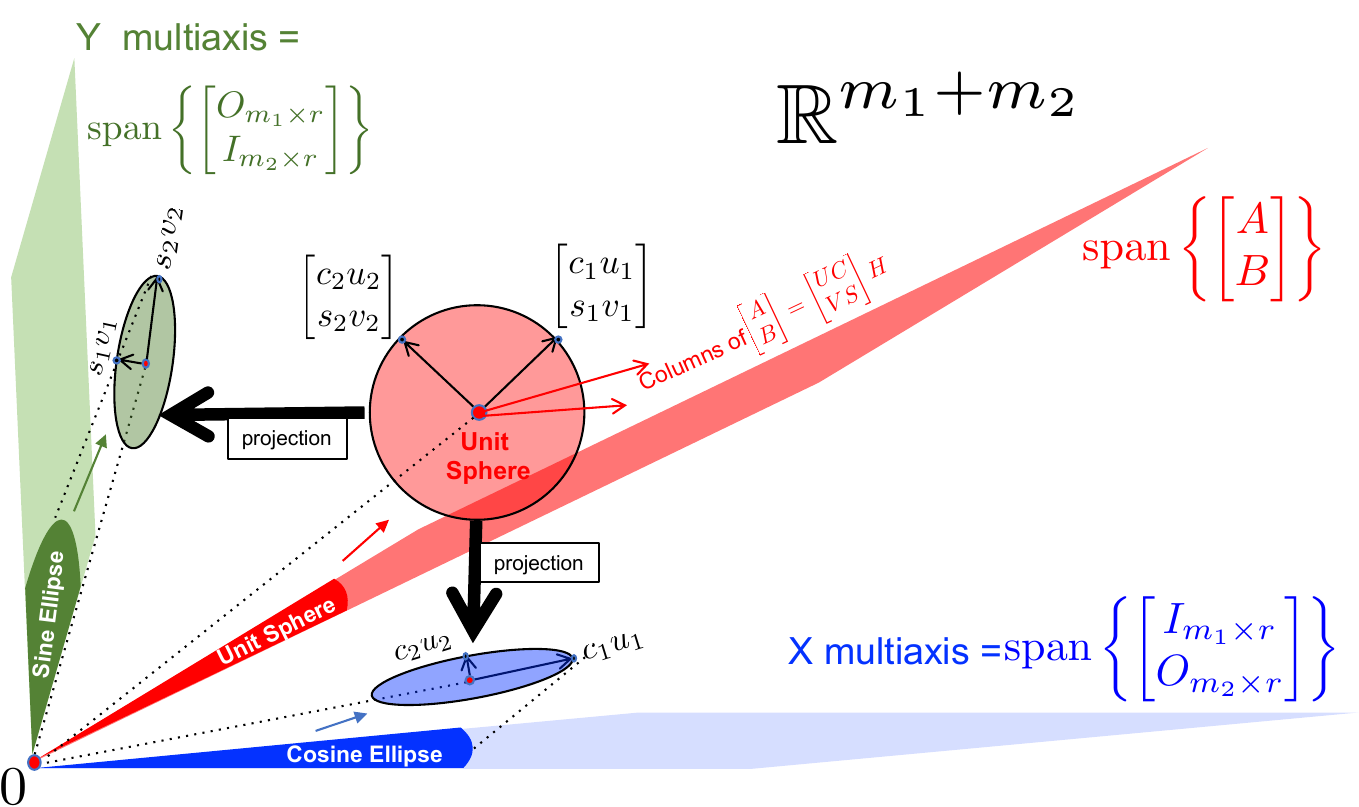}
\end{center}
\caption{Ellipse picture for the GSVD (illustrated generically in four dimensions with $m_1=m_2=r=r_a=r_b=2$): containing a red plane (span of $[A;B]$), and the $X$ (blue) and $Y$ (green) multiaxes.
Centered at the origin is a unit sphere (light red) and two ellipses (blue and green) shown in exploded view format.
The ellipses, which may be named the cosine and sine ellipses are  ``horizontal'' and ``vertical'' shadows of the unit sphere. \newline \newline
Color Coding (consistent for all figures in this paper): \newline 
\color{red} shade of RED=Span($[A;B]$), 
\color{blue} shade of BLUE=$X$ Multiaxis, 
\color{mygreen} shade of GREEN=$Y$ Multiaxis.
}
\label{fig:elip}
\end{figure}

The gap in understanding is underscored by the curiosity expressed online, but without answer, on such sites
as MATLAB Central~\cite{dyas2000}   (reproduced here\footnote{The authors contacted Mr. Dyas  on December 26, 2019 to inform him of the solution of his twenty year query.}) and  a similar request on the question-and-answer site Quora~\cite{quora2014}  (not reproduced here).

\begin{quotation}
\begin{verbatim}


Subject: Generalized SVD geometry?
From: Bob Dyas
Date: 29 Feb, 2000 15:31:31
\end{verbatim}
\noindent
\verb+Message: 1 of 1+ {\color{red} $\longleftarrow$ indicates no answer in 20 years!}

\noindent
\begin{verbatim}

Is there a geometric interpretation of the generalized singular
value decomposition? I'm looking for something comparable to 
the geometry associated with the standard SVD. I understand how 
U, V and the singular values of the SVD relate to the geometry
of the input matrix but I don't have an intuitive feel for how 
U, V, X and the generalized singular values relate to the 
geometry of the two input matrices of the GSVD.

Any help would be appreciated.
-- 
Bob Dyas 
\end{verbatim}
\end{quotation}

\subsection{Understanding the Ellipse Picture for the GSVD}

Figure 2, portrayed in four dimensional space, generically serves to illustrate the GSVD in any dimensions.

Given $A \in \R^{m_1, n},B \in \R^{m_2, n}$, we consider the unit sphere  (shown in exploded form in Figure 2
as a red circle) in the span of $[A;B]$ (shown as a red plane).
In blue and green  we have the ellipses that show the ``downward'' and ``leftward'' projections of these ellipses onto
the multiaxes $X$ and $Y$ defined as those vectors whose first $m_1$ or last $m_2$ coordinates may not vanish.
(For example if $m_1=m_2$ in $\R^4$, then the $X$ multiaxis consists of vectors of the form $(x_1,x_2,0,0)$ and
the $Y$ multiaxis consists of vectors of the form $(0,0,x_3,x_4)$.

The $u_i,v_i$ are semi-axes of these ellipses, with lengths $c_i,s_i$.  The vector $[u_i c_i ; v_i s_i]$ is on the (red)
unit sphere  in the span of $[A;B]$.

Since we have the equality $[A;B]x = [UC;VS]Hx$, we see that $H$ is the change of coordinates from the columns of $[A;B]$
to the orthonormal columns of $[UC;VS]$, and $H^\dagger$ goes the other way.

\subsection{An in depth look at  small dimensional special cases}

\subsubsection{\texorpdfstring{A red line in $\R^2$,  $X$=the $x$-axis, $Y$=the $y$-axis}{}}

$(m_1=m_2=n=r=1)$

Below we show the possibilities for $[C;S]$ for a line in $\R^2$  (drawn in red as the span of $[a,b]$ where $a$ and $b$ are $\in \R^{1}$)  which may
be horizontal $a\ne 0, b=0$, general position $a\ne 0, b\ne 0$, or vertical
$a=0, b\ne 0.$  In any event the $c$ and $s$ are the cosine and sine of the angle with the horizontal.

\begin{center}
\includegraphics[width=.9\textwidth]{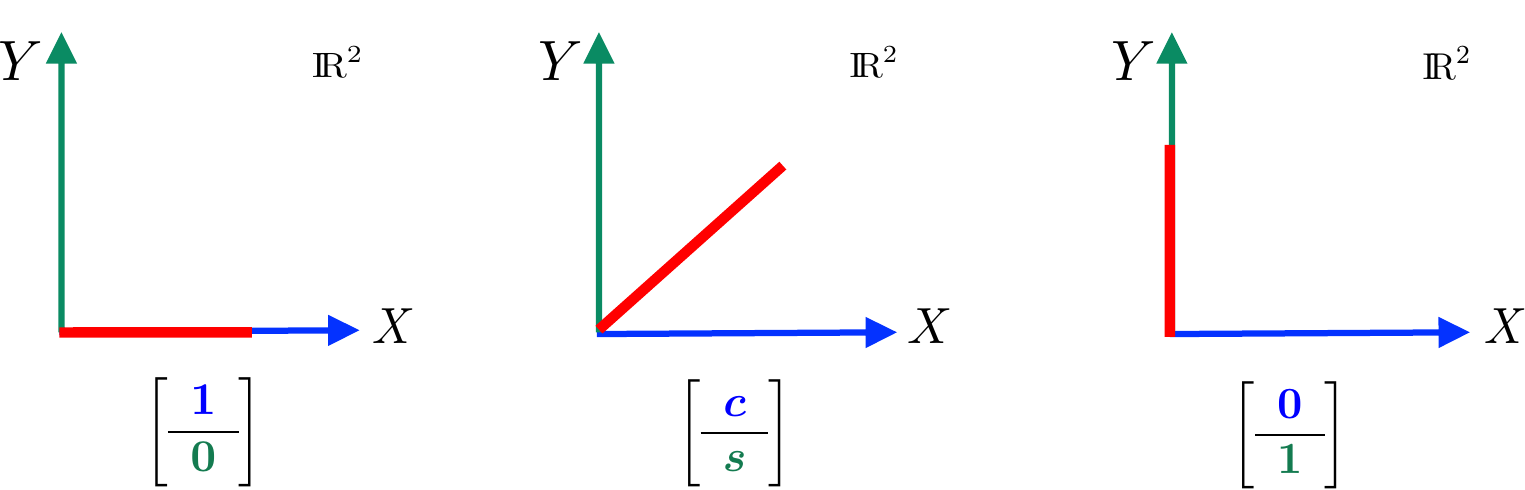}
\end{center}

\subsubsection{\texorpdfstring{A red line in $\R^3$,  $X$=the $xy$-plane,  $Y$=the $z$-axis}{}}

($m_1=2, m_2=n=r=1$ )
Below we show the possibilities for $[C;S]$ for a line in $\R^3$  (drawn in red as the span of $[a,b]$, where $a \in \R^2$, $b \in \R^1$).
The $X$ multiaxis is traditionally labeled the $xy$-plane, and the $Y$ is the $z$-axis.
A line can be in the $xy$-plane, in general position, or along the $z$-axis.  The corresponding $[C;S]$ matrix is illustrated.
The $c$ is the angle between the red line and the $xy$-plane, while the $s$ is the angle of the red line and the $z$-axis.

\begin{center}
\includegraphics[width=.9\textwidth]{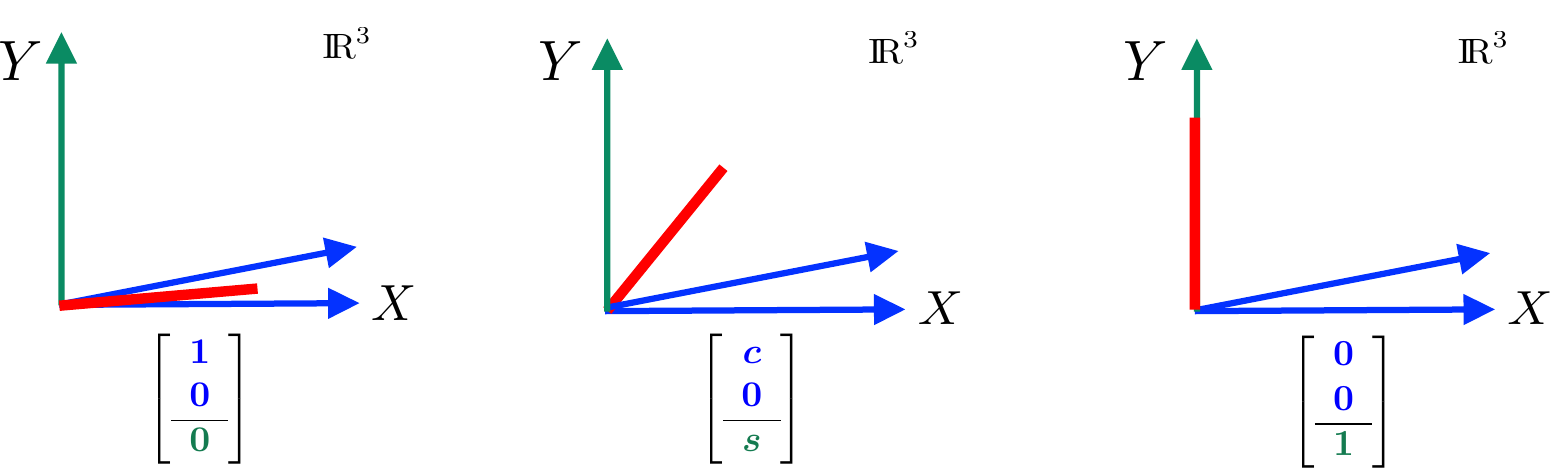}
\end{center}

\subsubsection{\texorpdfstring{A red line in $\R^3$,  $X$=$x$-axis,  $Y$=the $yz$-plane}{}}

($m_1=2, m_2=n=r=1$ )
Below we show the possibilities for $[C;S]$ for a line in $\R^3$  (drawn in red as the span of $[a,b]$, where $a \in \R^1$, $b \in \R^2$).
A line can be along the $x$-axis, in general position, or in the $yz$-plane.  The corresponding $[C;S]$ matrix is illustrated.
The $c$ is the angle between the red line and the $x$ axis, while the $s$ is the angle of the red line and the $yz$-plane.
The shaded $Y$=$yz$-plane indicates the red line is in that plane.

\begin{center}
\includegraphics[width=.9\textwidth]{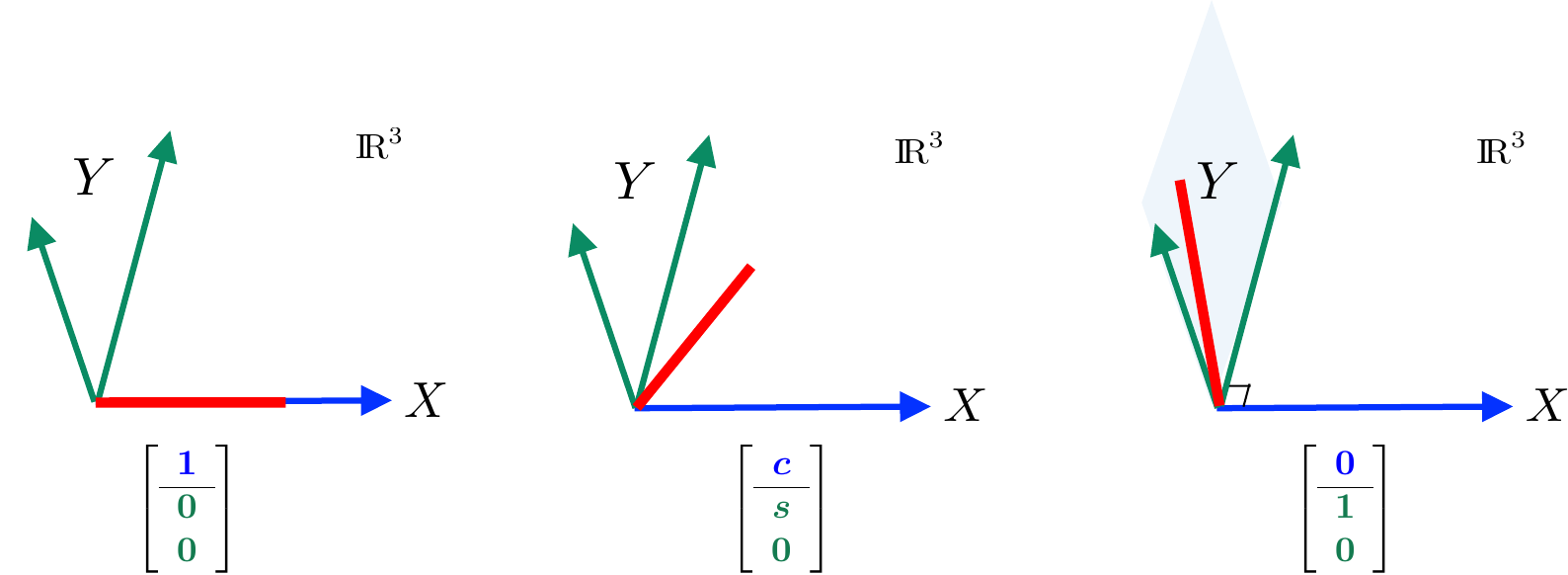}
\end{center}

\subsubsection{\texorpdfstring{A red plane in $\R^3$,  $X$=the $xy$-plane,  $Y$=the $z$-axis}{}}
\label{m212}

($m_1=2, m_2=1, n=r=2$ )
Below we show the possibilities for $[C;S]$ for a plane  in $\R^3$  (drawn in red as the span of $[A,B]$, where $A \in \R^{2,2}$, $B \in \R^{1,2}$).
A plane can be the $xy$-plane.
A plane in general position in $\R^3$  intersects the $xy$-plane in a line (shown as a dashed red line) but does not include the $z$ axis.
A final possibility for a plane is that it includes the $z$ axis (broken red/green line.)
 
 The corresponding $[C;S]$ matrix is illustrated.
 We have $c_1=1$ corresponding to the 0 degree angle  from a line in the red plane and the $x,y$ axis.  We have $c_2$ which is the cosine of the angle formed from a line at right angles from the aforementioned line and the $xy$-plane.  Note that $s_1=0$ is not found in the $S$ matrix, since there is room for only one row which contains $s_2$.

\begin{center}
\includegraphics[width=.9\textwidth]{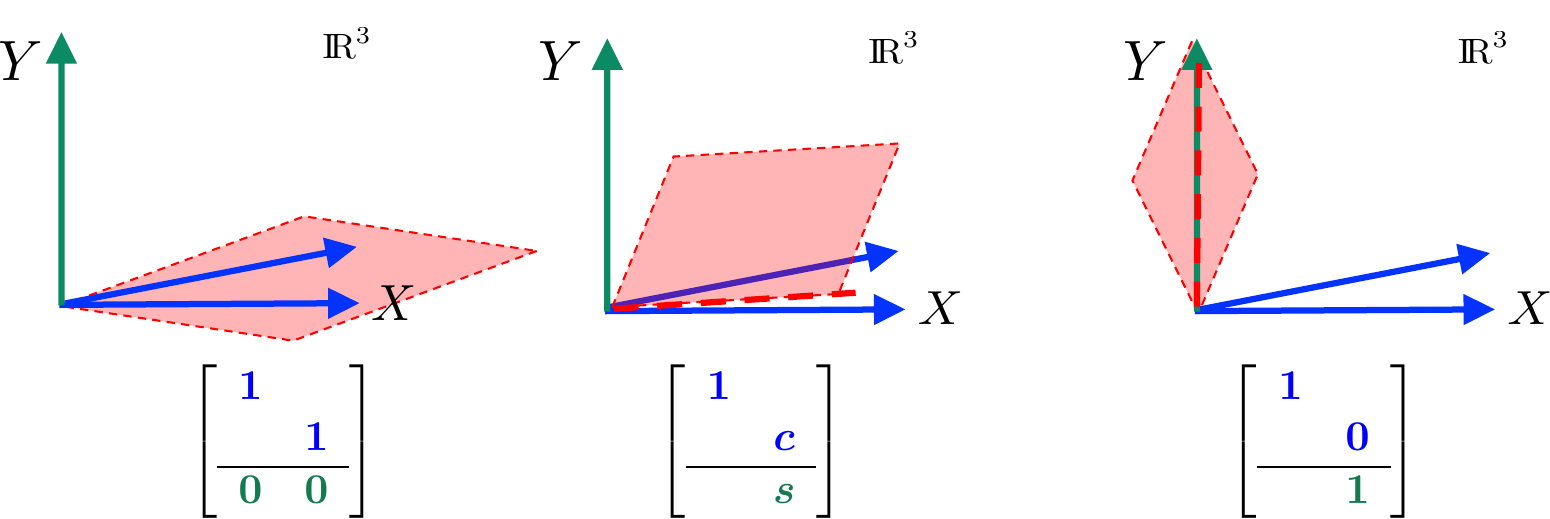}
\end{center}

Figure  \ref{fig:2dexp}
below is the ellipse picture in 3 dimensions (3d), which admittedly has too few dimensions to understand the general picture. Nevertheless, one can clearly see the unit circle in the sphere being projected down to an ellipse on the $x,y$ axis. We see the $c_1=1$ and $c_2=\cos \theta$ as the lengths of the semi-axis of the ellipse.  The $u_1$ direction is where 
the plane representing $\text{span}([A;B])$ intersects the $xy$-plane.  The $u_2$ direction is orthogonal to $u_1$ and also in the $\text{span}([A;B])$ plane.  The $u_2$ direction is the maximum slope 
off the $xy$-plane, and $s_2=\sin \theta$ is the length of the projection of the unit circle onto the $z$-axis.  The orthogonal direction projects to $0$ giving the $s_1=0$. 

 \begin{figure}[htp]
\includegraphics[width=.9\textwidth]{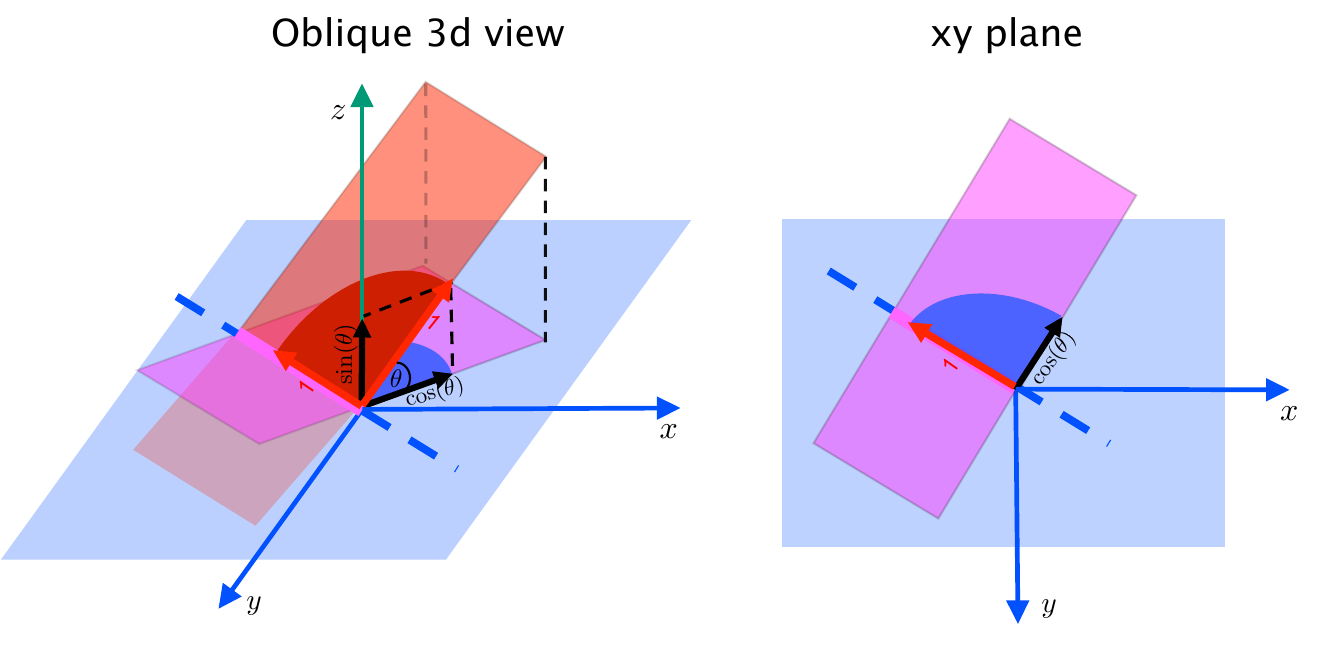}
 \caption{GSVD in 3d is a bit cramped:  Oblique 3d view (left) and $xy$-plane (right). Generically a hyperplane will intersect the $xy$-plane in a line (blue dashed line) which will contain simultaneously the major axis of the blue (cosine) ellipse and a diameter of the  red circle.
In 3d, we have  $c_1=1, c_2=\cos \theta$ to indicate the intersection and the angle $\theta$ with the $xy$-plane, respectively.  We also have $s_1=0, s_2=\sin \theta$ which indicates that with respect to the $z$ axis, the red hyperplane has one vacuous direction  (the red arrow in the $xy$-plane) and the orthogonal direction
(other red arrow in the red hyperplane) makes an angle of $\pi/2-\theta$.
In summary, the blue (cosine) ellipse has semi-axes $1$ and $\cos \theta$, the green (sine) ellipse is confined to 1d and has an unseen 0 and $\sin \theta$, while of course the unit circle has radius $1$.
}

 \label{fig:2dexp}
 \end{figure}

\subsection{On infinite generalized singular values and horizontal directions}
\label{infinite}

As may become clear upon inspection of the small dimensional cases, it is very possible that we have some $c_i=1$ and $s_i=0$
so that the generalized singular value $c_i/s_i$ is infinite.  These infinite singular values are associated with horizontal directions
$[u_i;0]$ in the ``red'' hyperplane, i.e. $[u_i;0] \in {\text span}([A;B])$.  They arise when our hyperplane intersects our $X$ multiaxis
in any non-zero direction.

The situation in Section \ref{m212} illustrates that this is typical when we consider a plane in $\R^3$ and $X$ is the $xy$-plane.
( $A$ is $2 \times 2$ and $B$ is $1 \times 2$.) 
The unit circle in the plane has a  vector of length 1, $[u_1;0]$, that lives on
the horizontal $xy$-plane.  The orthogonal direction, $[c_2 u_2, s_2]$ has a projection $[c_2 u_2;0]$ on the $xy$-plane that is generically
shorter than a unit vector, but still orthogonal to $[u_1;0]$.  


\section{Matrix Trigonometry}
\label{sec:main}
We claim that the GSVD is the natural generalization of high school trigonometry to what we might call ``matrix trigonometry.''


There is so much in Figure \ref{fig:triangle}  that we are all familiar with in the planar case:
There is all of  {\it trigonometry,} and in particular there
is $\tan\theta$ which has a special role because $B/A$ 
is the {\it  slope} of the line. If  $|B|$ is small relative to $|A|,$
we have a shallow slope, and vice versa.  The only hint
that there is some directionality is the  possibility of a $\pm$ sign. 
To specify
directions we sometimes would write a hypotenuse vector in 
{\it component form}: $A\mbox{\bf i}+B\mbox{\bf j}$ .   If we take the components of a unit vector in the direction of the hypotenuse,
then the components form a {\it cosine-sine} pair:  $\cos\theta \mbox {\bf i} + \sin\theta \mbox{\bf j}$.

\begin{figure}[t]
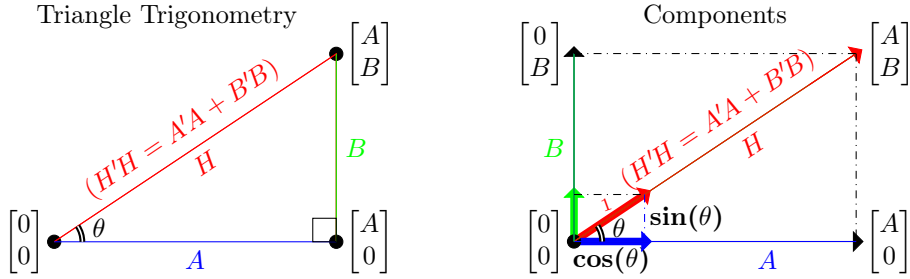

\centering
\drawTriangleA
\hspace{0.4in}
\drawTriangleB
\caption{The GSVD is the generalization of the  trigonometry picture 
(left) or the components picture (right)  to higher dimensions.
When $A$ and $B$ are
$1 , 1$ these pictures specialize
to familiar grade school trigonometry
 (the 2d case where small letters $a$ and $b$ could be used, but we want the reader to think matrix trigonometry as quickly as possible so we will use the capital letters here ).
\newline
\hspace*{.3in} As a portrayer of higher dimensions, line segments represent hyperplanes,
and the desired ellipses are hiding inside the subspaces as the thick unit vector along the hypotenuse (unit sphere in higher dimensions), and the thick components in the cosine-sine pair (horizontal and vertical ellipses in higher dimensions).
\newline
\hspace*{.3in}
Notice that the generalized hypotenuse $H$ is not the matrix square root  but does satisfy $H'H = A'\!A + B'\!B$ (The reason a simple matrix sqrt does not work is that we must denote the direction
of every component in higher dimensions). The cosine form of the GSVD denotes the singular values of $A/H$, and the
sine form denotes  the singular values of $B/H$.
}
\label{fig:triangle}
\end{figure}

The ideas of trigonometry, slope,  component form 
and cosine-sine pairs extend to higher dimensions through the GSVD.  
Instead of one triangle, there are $n$ triangles.  Instead of one vector
\mbox{\bf i}, there are $n$ vectors in the columns of $U$.  Instead of one vector
\mbox{\bf j}, there are $n$ vectors in the columns of $V$.  Instead of a unit
length hypotenuse there are $n$ unit length hypotenuses, which can be written in the component
form 
\[
\cos\theta_k  \begin{bmatrix} u_k \\ 0  \end{bmatrix} + \sin\theta_k  \begin{bmatrix}   0 \\  v_k \end{bmatrix}, \qquad k=1,2,\ldots,n.
\]

The $n$ hypotenuses, as we show in Figure~\ref{fig:elip}, live on a unit sphere that projects nicely ``down''ward and  ``left''ward.  The $\cos\theta_ku_k$  are semi-axes of the downward ellipse; and the $\sin\theta_k v_k$ on the leftward ellipse.

Just as  $b/a$ tells you how small or big $b$ is relative to $a$, the GSVD tells you how small or big $B$ is relative to $A$, but now it is in $n$ natural directions. Thus $B$ can be larger than $A$ in some directions, and smaller in others.

There is some temptation to try to say that the GSVD is related to the
principal angles of the column space of $A$ and the column space of $B$.  This of course
makes no more sense than looking for anything other than right angles between the
$x$-axis and the $y$-axis in 2d.  The interesting angles are between the span of 
the column space of $[A;B]$ and the canonical axes $[I_1; 0]$. More details can be found in Section~\ref{sec:pa}.

One quick algebraic way to define the singular values of an $m, n$ matrix $A$ is to find the diagonal matrix
with non-negative entries in the set $\{UAV' \}$  where $U$ is $m$ by $m$ orthogonal and $V$ is $n$ by $n$ orthogonal.
This is the equivalence class representative definition.
Similarly, one can define the generalized singular values of a pair of matrices $(A,B)$ with the same number of columns.  The ``cosine-sine'' format, is the pair of (1-)diagonal matrices $(C,S)$
with non-negative entries in the set of matrix pairs  $\{(UAH^{-1},VBH^{-1}) : U,V \mbox{ orthogonal}, H \mbox{ non-singular}\}$.
Often the GSVD is given in ``cotangent'' format, which is the ratio of cosines to sines. 

We summarize the GSVD properties with Table~\ref{tab:gsvdp}.

\begin{table}[t]
\begin{center}
    \begin{tabular}{ m{0.7cm} | m{5.5cm}| m{5cm}}
    \toprule
    \multicolumn{3}{c}{}\\
       \multicolumn{3}{c}{$
       \left[
		\begin{array}{c}
		A \\ 
		B
		\end{array}\right] = \left[
		\begin{array}{cc}
		UC \\ 
		 VS 
		\end{array}\right]
		 H$}\\    
       \multicolumn{3}{c}{}\\
       \midrule
       & \\
       $C, S$ &\drawTriangleA  & $\theta$: Principal angle between $\text{span}
    \left\{
       \begin{bmatrix}A \\ B\end{bmatrix}
    \right\}
       $ and $\text{span
       }
  \left\{
       \begin{bmatrix}I_n \\ 0\end{bmatrix}
      \right\}
       $ 
       
       \vspace{.15in}

       $\sin\theta$: \text{SVD}($BH^\dagger$)

      $\cos\theta$: \text{SVD}($AH^\dagger$)

$\tan\theta$: \text{SVD}($BA^\dagger$) if $r=r_a:=\text{rank}(A)$

         $\textbf{cot}\bm{\theta}$: \text{SVD}($AB^\dagger$)  if $r=r_b:=\text{rank}(B)$ \\
       \midrule
       $U$ & \multicolumn{2}{c}{ left singular vectors of $AH^\dagger$ ( or $AB^\dagger$  if $r=r_b$)}\\
       $V$ & \multicolumn{2}{c}{ left singular vectors of  $BH^\dagger$ ( or $BA^\dagger$ if $r=r_a$)}\\
       \bottomrule
    \end{tabular}
\end{center} 
\caption{A primer of the properties of GSVD.}
\label{tab:gsvdp}
\end{table}

\section{The relationship between the GSVD and the CS Decomposition}
\label{sec:cs}
 It is often written~\cite[Section 8.7.5]{golub2012matrix}  that the GSVD and the CS Decomposition are closely related.  The geometric viewpoint highlights the GSVD and the CS decomposition as rooted in  representations of  points in the Grassmann manifold (linear hyperplanes through the origin) in an  $m = m_1 + m_2$ dimensional space using $[UC;VS]$ as natural coordinates.

The simple notion is that the information  may be thought of as 
\[
\begin{bmatrix}A \\ B\end{bmatrix}  = \hspace*{-.5in}
\underbrace{\begin{bmatrix} UC \\ VS\end{bmatrix}}_{\text{\parbox{15em}{\begin{center}column space as a hyperplane  \\ \vspace{-.05in} (a canonical basis!) \end{center}}}}
\hspace*{-.3in}
  \times \   \underbrace{H}_{\text{\parbox{10em}{\begin{center} \vspace{.14in} Coordinates of $[A;B]$ \\ \vspace{-.06in} in the $[UC;VS]$ basis. \end{center}}}}
\]

This connection is rooted ultimately in the Cartan decomposition of the Grassmann manifold, one of the finitely many classes of  symmetric spaces \cite{helgason2001differential}.
The idea is that certain matrix spaces have a ``KAK''  or compact/abelian/compact decomposition.  The SVD is one example as it is orthogonal/diagonal/orthogonal.  The CS decomposition is another.
This observation may be found in a numerical linear algebra conference  presentation  \cite{edelmanhh2002} and in the quantum computing literature \cite{tucci2005introduction}.

To be sure if $[A;B]$ is already orthogonal then so is $H$.  This constitutes the ``left half'' of the complete CS decomposition.  Thus a GSVD is a ``left half'' of a CS, when $[A;B]$ are orthogonal, and the ``left half'' of a CS is
a GSVD.  One can also have a basis for the orthogonal complement of span($[A;B]$) to get the 
``right half.''   This captures the isomorphism between the Grassmann manifold $\mathcal{G}_{m,n}$ (i.e., $n$-dimensional subspace in $\R^m$) and $\mathcal{G}_{m,m-n}$ (i.e., $(m-n)$-dimensional subspace in $\R^m$).  Thus if one takes the combined SVD's of orthogonal matrices whose spans are orthogonal complements, one has the CS decomposition and vice versa.

Any which way, the mathematical idea underlying all is that there is a fairly canonical representation for generic elements of the Grassmann manifold and a matrix connecting back to an orthogonal or arbitrary basis
which has a further symmetry property when taking both the span of $[A;B]$ and its orthogonal complement in conjunction in that transposing a full orthogonal matrix  reverses the roles canonical coordinates and basis converter.

\subsection*{Parameter Count}

There has been a longstanding tradition in numerical linear algebra to overwrite  matrix inputs  with the parameters from the factored form.  Thus if $A$ is $n \times n$, the $LU$ factorization has the $n(n-1)/2$ parameters from $L$ and the $n(n+1)/2$ parameters from $U$.  Similarly if $A=QR$, the $Q$ while appearing naively as an $n \times n$ matrix, actually only has $n(n-1)/2$ parameters, which is exactly
what is computed in software~\cite{anderson1999lapack}.  

Given an $m \times n$ matrix  $[A;B]$ of rank $r$, and a decomposition of $m$ as $m=m_1+m_2$, we can count parameters on both the left and right sides of
$[A;B]=[UC;VS]H.$  While tricky, the only facts needed are:
\begin{enumerate}
\item{Rank Codimension: The codimension of the rank $r$ matrices of size $m \times n$ is $(m-r)(n-r)$  \cite[Lemma 3.3]{demmel1995dimension}.}
\item{Stiefel Manifold Dimension: The dimension of the Stiefel manifold $\mathcal{V}_{m,n}$ of  $n$ ordered orthonormal directions in $\R^m$ is $n(m-n)+ n(n-1)/2$ \cite[Section 2.2]{edelman1998geometry}.}
\item{Grassmann Manifold Dimension: The dimension of the Grassmann manifold $\mathcal{G}_{m,n}$ of  $n$-dimensional subspaces  in $\R^m$ is $n(m-n)$ \cite[Section 2.5]{edelman1998geometry}.}
\end{enumerate}

\vspace{.1in}

\begin{tabular}{|l|lll|} \toprule
& $r \le m_1 \le m_2$ & $m_1 \le r \le m_2$ & $m_1 \le m_2 \le r$ \\ \midrule
rank $r$ codim &{$(m-r)(n-r)$}  & {$(m-r)(n-r)$} & {$(m-r)(n-r)$}\\
$H$ ($r \times n$)& $rn$ & $rn$ & $rn$\\ 
$ 0 < \theta_i < \pi/2$ &  $r$ & $m_1$ & $m-r$ \\ \midrule
\multirow{2}{*}{$U$ Stiefel} & $(m_1 - r)r $ & $m_1(m_1 - 1)/2 $ &  $(r - m_2)(m - r) $   \\
&$+ r(r - 1)/2$&&$ +(m- r)(m - r - 1)/2$ \\
\multirow{2}{*}{$V$ Stiefel} & $(m_2 - r)r $& $(m_2 - m_1)m_1$& $(r - m_1)(m - r)$ \\
 & $+ r(r - 1)/2 $&$ + m_1(m_1 - 1)/2$& $ +(m- r)(m - r - 1)/2$\\
$V$  Grassmann& 0 & $(r - m_1)(m_2 - r)$  & 0 \\ \midrule
Total & $mn$ & $mn$ & $mn$ \\ \bottomrule
\end{tabular}

\vspace{.1in}

To understand the parameter count, we begin with the simple observation that $r_a=\min(r,m_1)$ generically and $r_b=\min(r,m_2)$, from which we can derive the number of $\theta_i$ that
are strictly between $0$ and $\pi/2$ as $r_a+r_b-r$. The relevant Stiefel manifolds are $\mathcal{V}_{m_1,r_a+r_b-r}$ and  $\mathcal{V}_{m_2,r_a+r_b-r}$.  These correspond exactly to choosing the directions
for the axes of the ellipses. Also one must consider $\mathcal{G}_{m_i - (r_a+r_b-r) , r-r_a}$ for $i=1,2$ as this is the dimension divide between the $0$ degree angles and the $\pi/2$ angles when this has content. This data is summarized below:

\vspace{.1in}
\begin{center}
\begin{tabular}{|l|lll|} \toprule
& $r \le m_1 \le m_2$ & $m_1 \le r \le m_2$ & $m_1 \le m_2 \le r$ \\ \midrule
$r_a$ &  $r$  & $m_1$ & $m_1$  \\
$r_b$ &  $r$  & $r$ & $m_2$   \\
$r_a+r_b-r$ & $r$ & $m_1$ &  $m-r$ \\ \midrule
$U$ Stiefel  &  $\mathcal{V}_{m_1,r}$ & $\mathcal{V}_{m_1,m_1}$   & $\mathcal{V}_{m_1,m-r}$ \\
$V$ Stiefel &  $\mathcal{V}_{m_2,r}$ & $\mathcal{V}_{m_2,m_1}$   & $\mathcal{V}_{m_2,m-r}$ \\
$V$  Grassmann & - & $\mathcal{G}_{m_2-m_1,r-m_1}$ & -\\
 \bottomrule
\end{tabular}
\end{center}

\vspace{.1in}

We remark that further fine grain detailed parameter counts are possible including lower rank $A$ and $B$, but we content ourselves with the table above.

\section{Principal angles between subspaces}
\label{sec:pa}

Section~\ref{sec:main} points out that the GSVD of $A$ and $B$ does not contain angle information between the column spaces of $A$ and $B$. Rather, Figure~\ref{fig:elip} illustrates that the relevant angles are between the ``red space'' ($\text{col}([A;B])$) and the ``blue space'' ($\text{col}([I_1;0])$). 

This suggests that the GSVD can be used to compute principal angles (see Section 6.4.3. of~\cite{golub2012matrix}) between the column spaces of $A$ and $B$ when $m_1 = m_2.$ More precisely, it can be accomplished by letting $Z = [Y|Y^\perp]$ be any orthogonal matrix where $\text{col}(Y)=\text{col}(B).$ It follows that $\text{GSVD}(Y'A, (Y^\perp)'A)$ are the cotangents of the desired principal angles. 

This maybe seen geometrically as the GSVD computes the cotangents of angles between 
\[
\text{col}\left(\begin{bmatrix}Y'A \\ (Y^\perp)'A\end{bmatrix}\right) = \text{col}(Z'A), \quad \text{and},\quad \text{col}\left(\begin{bmatrix}I_1\\0\end{bmatrix}\right),
\]
but we can multiply by the orthogonal matrix $Z$, which preserves angles, obtaining the angles between $\text{col}(A)$ and $\text{col}(Z[I_1;0]) = \text{col}(B).$

We can conclude that we have a rotated Figure~\ref{fig:elip} (shown in Figure~\ref{fig:rotate}) where the $X$ and $Y$ multiaxes are replaced with $\text{span}(Y)$ and $\text{span}(Y^\perp).$

\section{The Lemniscate Plots from Leuven, Belgium}
\label{sec:energy}

In a series of early papers most of which date back to the 1980s \cite{callaerts1989signal,callaerts1990comparison,chu2003qr,de1988mathematical,de1988oriented,vandewalle2003generalized}, energy portraits 
that relate to the SVD and GSVD of a matrix or a pair of matrices are discussed with applications.

The definition of an energy portrait of a single matrix is
\[\mbox{Energy}(A) = \{  e\|Ae\|^2 : \|e\|=1 \} \subset \R^n, \ (A\in \R^{m,n})\]
and for a pair of matrices with the same number of columns
\[\mbox{Energy}(A,B)=\left\{ e \frac{  \|Ae\|^2 }{\|Be\|^2} : \|e\|=1 \right\} \subset \R^n, \ (A\in \R^{m_1,n}, B\in\R^{m_2,n}).\]

\begin{figure}[ht]
    \begin{minipage}{.5\textwidth}
        \centering
        \includegraphics[width=\textwidth]{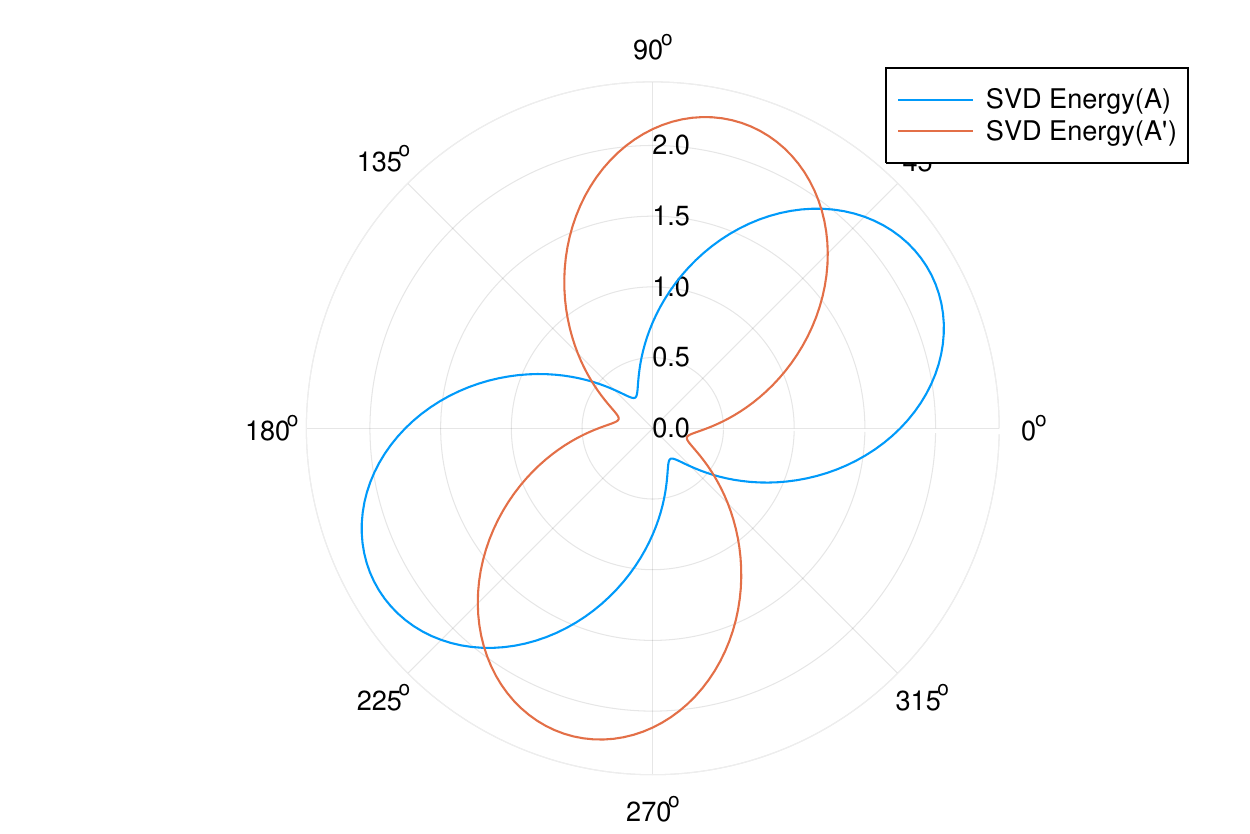}        
    \end{minipage}%
    \begin{minipage}{0.5\textwidth}
        \centering
        \includegraphics[width=\textwidth]{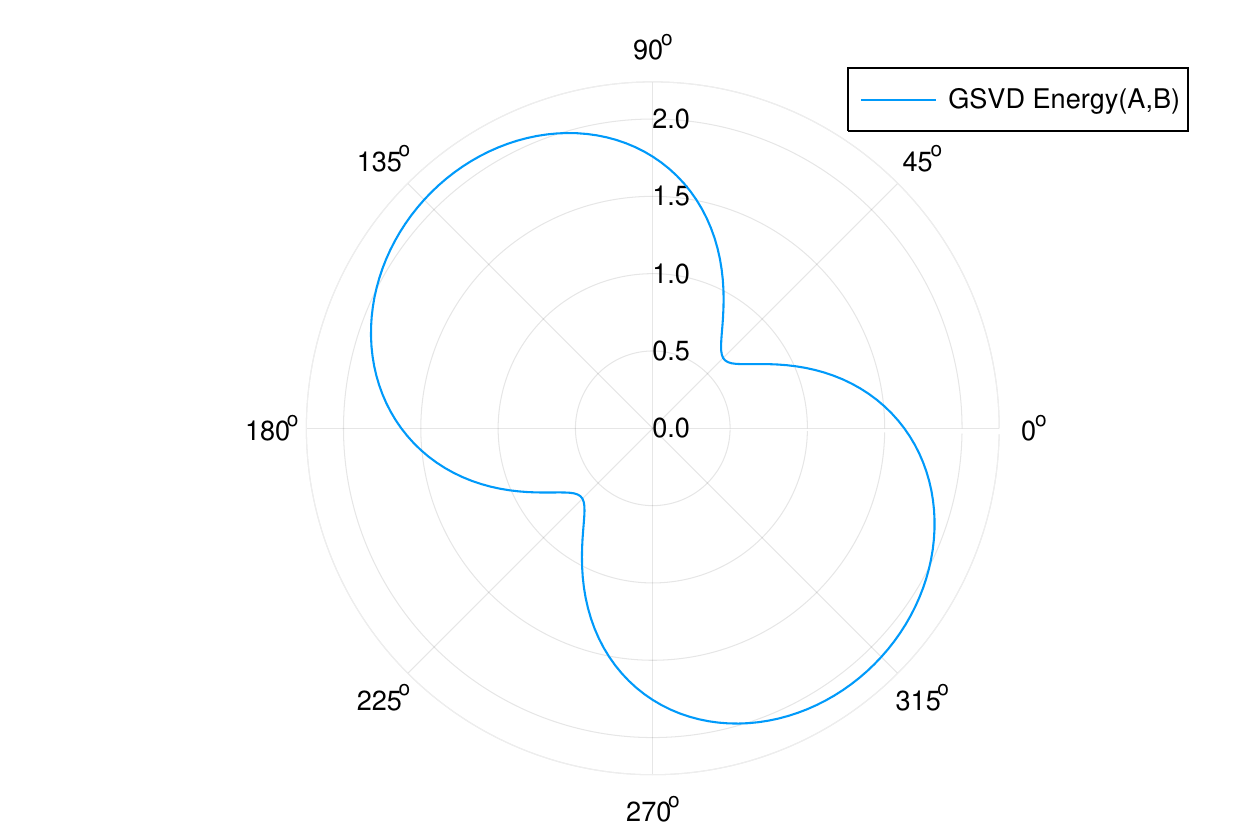}              
    \end{minipage}
  \caption{Leminiscate plots:  Energy($A$) and Energy($A'$) reproducing    from \cite[Figure 3]{staar1982singular} (left) and using the matrices below for Energy$(A, B)$ (right).}
\label{fig:lemin}
\end{figure}

It is important to point out that the curves in Figure~\ref{fig:lemin} are not ellipses but rather lemniscate-like portraits.
They do not even live in the same spaces as the ellipse pictures.  
The standard SVD ellipse lives in  $\R^m$ and the GSVD picture in this paper lives in $\R^{m_1+m_2}$.
By contrast, the energy portraits from Leuven live in $\R^n$.

We provide the Julia codes that produce these curves as a reference.  Readers are encouraged to try other matrices. \\

{
\hspace{.1in}
\begin{jinput}
\small
\begin{verbatim}
A = [.577699 -.224144;1.190069 .836516] # Figure 6 (Left)
e(theta) = [cos(theta), sin(theta)]
r1(theta) = sum(abs2, A*e(theta))
r2(theta) = sum(abs2, A'e(theta))
theta = pi * (0:.01:2)
plot( theta, r1.(theta), proj=:polar, label="SVD Energy(A)")
plot!(theta, r2.(theta), proj=:polar, label="SVD Energy(A')") 
\end{verbatim}
\end{jinput}
}

{
\hspace{.1in}
\begin{jinput}
\small
\begin{verbatim}
A = [.27 .66 ; -1.4 1.3] # Figure 6 (Right)
B = [1 0; -.5 1.1]
e(theta) = [cos(theta), sin(theta)]
r1(theta) = sum(abs2, A*e(theta))
r2(theta) = sum(abs2, B*e(theta))
theta = pi * (0:.01:2)
plot(theta,r1.(theta)./r2.(theta),
     proj=:polar,label="GSVD Energy(A,B)") 
\end{verbatim}
\end{jinput}
}

For completeness, we thought we would take a closer look at these older plots.
To explain in what sense the curves are lemniscates, it is best to eliminate the ``e'' in the definition
and rewrite the energy plots as the zero set of an algebraic equation, thereby connecting
the portraits to the field of  algebraic geometry.

\begin{theorem}
If $Vx \ \in \mbox{Energy}(A)$, then $x$ satisfies the algebraic polynomial equation
$$ \left[\sum x_i^2\right]^3 = \left[\sum \sigma_i^2 x_i^2\right]^2,$$
where $A=U\Sigma V'$. Further if 
$x  \in \mbox{Energy}(A,B)$, then $x$ satisfies the algebraic polynomial equation
$$\|x\|^2\|SHx\|^4 = \|CHx\|^4,$$
where $[A;B]=[UC;VS]H$.
\end{theorem}

Before proving the theorem we provide a historical analog.
 We might compare  the solution set of
$ (\sum_{i=1}^n x_i^2)^3 = (\sum_{i=1}^n \sigma_i^2 x_i^2)^2,$ with that of
$ (\sum_{i=1}^2 x_i^2) = (\sum_{i=1}^2 \sigma_i^2 x_i^2),$  which is the lemniscate of Booth whose study traces back to the 5th century Greek philosopher Proclus.
The difference being that Booth specialized to $n=2$ and only took first powers of the quantities, but in spirit it is a similar algebraic polynomial equation.

\begin{proof}
Taking $e=Vy$, we see that  $e\|Ae\|^2 = Vy\|\Sigma y\|^2=Vx$ where $x=y\|\Sigma y\|^2.$
It is straightforward to check $\|x\|^6=\|\Sigma x\|^4=\|\Sigma y\|^{12},$ since $\|y\|=1$ which is exactly the result for a single matrix.

For the two matrix case, where $A=UCH$ and $B=VSH$,  if $x=e\|Ae\|^2/\|Be\|^2$, then 
$$\|x\|^2=\frac{\|CHe\|^4}{\|SHe\|^4}, \ \  \mbox{ and} \ \ \frac{\|CHx\|}{\|SHx\|} = \frac{\|CHe\|}{\|SHe\|} . $$
\end{proof}

\section{\texorpdfstring{On the $\text{GSVD}(A, B)$ and the $\text{SVD}(AB^\dagger$)}{}}
\label{sec:thm}
In this section we relate the finite part (nonzero, noninfinite) of the generalized singular values of $(A, B)$ (denoted as $\text{GSVD}(A, B)$) to the singular values of $AB^\dagger$ (denoted as $\text{SVD}(AB^\dagger)$) where $B^\dagger$ is the pseudoinverse of $B$.
We may use the notation $A/B$ for $AB^\dagger$.  An issue arises that may surprise some readers.

\subsection{Why there is an issue?}

One may expect that there may always be a relation between the GSVD of $A,B$ and the SVD of $AB^\dagger$.
For example, in the \textsc{matlab} documentation\footnote{\url{https://www.mathworks.com/help/matlab/ref/GSVD.html}}
it is stated that the generalized singular values are the ratios of the 
diagonal elements of $C$ and $S$ in a given example.
One might infer from the documentation that this is always the case.

However it is not generally true when there are infinite singular values, i.e., when $r_b < r$.

Consider a simple example where $A$ is  a non-singular $n \times n$ matrix, and $B$ is a nonzero $1 \times n$ matrix.
In this case $r_b=1,r=n$.
The GSVD of $A,B$ is readily verified to have $n-1$ infinite singular values, and the one finite value
 $\sigma_{\text{GSVD}} = 1/\|B/A\|.$ The SVD of $AB^\dagger$ is just the length of $AB^\dagger = AB'/\|B\|^2$ or
$\sigma_{\text{SVD}} = \|BA'\|/\|B\|^2.$

When $n=1, A=a,B=b$, both of these expressions are equal to the absolute ratio $|a/b|$,  ($r=r_b=1$ after all) but for larger $n$
 the two matrix expressions  are not equal.

An extremely simple special case takes $A = \begin{pmatrix} 3 & 0 \\ 0 & 4 \end{pmatrix}$ and $B=(1 \ \ 1).$
The two values are   $ \sigma_{\text{GSVD}} =2.4 $ and $\sigma_{\text{SVD}}=2.5$ exactly.

The issue arises exactly when there are infinite $\sigma$.
If there are no infinite $\sigma$, $S$ has no $0$ columns, and we can write
$$AB^\dagger = (UCH)(VSH)^\dagger= UCHH^\dagger S^\dagger V' = U(C/S)V',$$
which is a singular value decomposition of $A/B$.  (We use the property that $H$ has full row rank to conclude $HH^\dagger=I_r$
and that $C/S$ is an $m_1 \times m_2$ matrix with $c_i/s_i$ on the main diagonal.)

The problem that arises when some $\sigma=\infty$ is  that  $B^\dagger=(VSH)^\dagger=(SH)^\dagger V'$ does not equal $H^\dagger S^\dagger V'$
when $S$ has any zero columns.

\subsection{\texorpdfstring{The significance of horizontal directions and their orthogonal complement in $X$}{}}
\label{horizontal}

In Section \ref{infinite}, we considered the intersection of span($[A;B]$) with the $X$ multiaxis.
An orthogonal basis for this intersection is $[u_1;0],\ldots,[u_{r-r_b};0]$ which correspond exactly
to the $c_i=1$.

Working entirely in $X$ as an $m_1$ dimensional space, we are interested in the $m_1 \times m_1$ projection
matrix $P$ that kills the directions of intersection.  Precisely we define $P$ on the orthogonal basis for $\R^{m_1}$:
$$
Pu_i =
  \begin{cases}
    u_i  & \text{if $c_i < 1$} \\
    0 & \text{if $c_i =  1$}.
  \end{cases}
$$

Suppose $N$ is a matrix whose columns are a basis for the null space of $B$.  If we consider $AN$ then 
the span of the  columns of $AN$ is the intersection
we are discussing, i.e., the intersection of $X$ with span($[A;B]$).
To be sure either the column of $N$ is in the common null space of $A$ and $B$, so that the corresponding column of $AN$ is $0$,
or else if one follows through the first $r-r_b$ columns of $H^\dagger$ in $A=UCH^\dagger$, one sees that we will
hit the ``$c_i=1$'' columns in $C$ only, hence we will emerge a linear combination of $u_1,\ldots,u_{r-r_b}$.

We can thus describe $P$ as the orthogonal projection onto the left nullspace of $AN$ which is the orthogonal complement of the column space of $AN$.

\subsection{\texorpdfstring{The correct modified theorem requires $PA/B$}{}}
We remind the reader of the usual definition of the matrix pseudoinverse in terms of the singular value decomposition:
\begin{equation}
A^\dagger = V\Sigma^\dagger U', 
\end{equation}
where $\Sigma^\dagger$ means taking the inverse of the finite entries in $\Sigma.$ When $A$ has full column rank and $B$ has full row rank, we have $(AB)^\dagger = B^\dagger A^\dagger.$ It is easy to see that $[\bm{0}\ B]^\dagger = [\bm{0}; B^\dagger].$

\begin{theorem}
\label{thm:pthm2}
Let $N$ be a  matrix whose columns are a basis for the nullspace of $B$, and  $P$  be the orthogonal projection onto the left nullspace of $AN$.
The finite non-zero generalized singular values of $(A, B)$ are the same as the non-zero singular values of $PAB^\dagger$.
\end{theorem}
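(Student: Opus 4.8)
The plan is to work directly from a GSVD $A=UCH$, $B=VSH$ of $(A,B)$ and simply compute $PAB^\dagger$, reading off its singular values from the block structure of $C$ and $S$ displayed in Table~\ref{tab:cs}. Recall the columns of $C,S$ split into a left block $L$ (indices with $c_i=1$, $s_i=0$; size $r-r_b$), a middle block $M$ (indices with $0<c_i,s_i<1$; size $r_a+r_b-r$), and a right block $R$ (indices with $c_i=0$, $s_i=1$; size $r-r_a$). The finite nonzero generalized singular values are precisely the cotangents $\sigma_i=c_i/s_i$ for $i\in M$ (finiteness forces $s_i>0$, nonzeroness forces $c_i>0$). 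So the goal is exactly to show the nonzero singular values of $PAB^\dagger$ are $\{c_i/s_i : i\in M\}$.

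First I would pin down $P$ and compute $PA$. By Section~\ref{horizontal}, the column space of $AN$ is $\operatorname{span}\{u_i : c_i=1\}$, i.e.\ the span of the first $r-r_b$ columns of $U$, so $P=UDU^{T}$ where $D$ is the $m_1\times m_1$ diagonal $0/1$ matrix zeroing those first $r-r_b$ coordinates. Hence $PA=UDCH$, and since $DC$ just deletes the left block of $C$ (whose diagonal entries are the $c_i=1$), we get $PA=U(C\Pi_M)H$, where $\Pi_M$ is the $r\times r$ diagonal projection onto the middle block $M$.

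The delicate point is $B^\dagger$: because $S$ has zero columns, one cannot write $B^\dagger=H^\dagger S^\dagger V^{T}$ — this is exactly the obstruction already identified in the text. Instead I would use a full-rank factorization. Let $J=\{i:s_i>0\}=M\cup R$, let $\widehat S$ ($m_2\times r_b$) collect the nonzero columns of $S$ (these have pairwise disjoint support, so $\widehat S$ has full column rank and $\widehat S^{T}\widehat S$ is diagonal), and let $G=I_J^{T}H$ ($r_b\times n$) collect the rows of $H$ indexed by $J$ (full row rank, being a row-subset of the full-rank $H$). Then $B=V\widehat S\,G$, and using that $V$ is orthogonal together with the rule $(PQ)^\dagger=Q^\dagger P^\dagger$ for $P$ full column rank and $Q$ full row rank (stated just before the theorem), $B^\dagger=G^\dagger\widehat S^\dagger V^{T}$.

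The key simplification is now that $M\subseteq J$. Since $GG^\dagger=I_{r_b}$, the rows of $HG^\dagger$ indexed by $J$ form $I_{r_b}$; hence $\Pi_M HG^\dagger$ is a $0/1$ sub-permutation matrix, and multiplying out $C\,\Pi_M\,HG^\dagger\,\widehat S^\dagger$ collapses to the $m_1\times m_2$ matrix whose only nonzero entries are the cotangents $c_i/s_i$, $i\in M$, sitting in pairwise distinct rows and columns. Therefore $PAB^\dagger=U\Sigma V^{T}$ with this sparse $\Sigma$, which (after permuting rows of $U$ and columns of $V$) is a genuine SVD; since $U,V$ are orthogonal, the nonzero singular values of $PAB^\dagger$ are exactly $\{c_i/s_i:i\in M\}$, the finite nonzero generalized singular values of $(A,B)$. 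I expect the main obstacle to be handling the pseudoinverse of the product $VSH$ cleanly — and the full-rank factorization of $B$ together with the containment $M\subseteq J$ is precisely what resolves it. Note that nothing in this argument requires $r=n$, so $H$ need not be square.
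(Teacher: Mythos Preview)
Your proof is correct and follows essentially the same route as the paper: identify $P$ as the projection killing the $u_i$ with $c_i=1$, take a full-rank factorization of $B$ by stripping the zero columns of $S$ and the matching rows of $H$, and then use $H_* H_*^\dagger = I_{r_b}$ (your $GG^\dagger = I_{r_b}$) to collapse the product into $U(C_*/S_*)V^{T}$. The paper's version is marginally more direct in that it immediately writes $PA = UC_* H_*$ (your $C_* = CI_J$, $H_* = G$), so the cancellation $H_* H_*^\dagger = I$ happens in one line rather than via your sub-permutation observation about $\Pi_M H G^\dagger$; but this is purely cosmetic, not a difference in method.
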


\begin{proof}

\newcommand\bovermat[2]{%
  \makebox[0pt][l]{$\smash{\overbrace{\phantom{%
    \begin{matrix}#2\end{matrix}}}^{\text{#1}}}$}#2}

Setting notation, we have

\begin{align*}
\overbrace{\hspace*{.45in}}^{C_*} \hspace{.3in} &  \\[-.05in]
 A = U   \left[ \begin{array}{c|cccc}  
\hspace*{-.05in}
\renewcommand{\arraystretch}{.1}
\setlength{\arraycolsep}{.1pt}
\begin{array}{ccccc}  
{\text  {\tiny 1}} \\ 
&  {\text  {\tiny 1}} \\ 
&&  .\\ \\
&&&  .\\
&&&&   {\text  {\tiny 1 \hspace*{-.1in}} }
 \end{array} & 0
 \\ \hline
& \renewcommand{\arraystretch}{.1}
\setlength{\arraycolsep}{.1pt}
\begin{array}{ccccc}
{\text  {\tiny $\! \! \! \!  c_{r-r_b+1}$}} \\ 
\ \  \ \ \  \vspace{.2in} \ddots \\ \\
 \end{array}
  \renewcommand{\arraystretch}{1.0} 
\end{array}\right]
H 
\\
\overbrace{\hspace*{.45in}}^{S_*} \hspace{.3in} &  \\[-.25in]
B = V  \left[ 
\begin{array}{c|cccc}  
\hspace*{-.05in}
\renewcommand{\arraystretch}{.1}
\setlength{\arraycolsep}{.1pt}
\begin{array}{ccccc}  
{\text  {\tiny 0}} \\ 
&  {\text  {\tiny 0}} \\ 
&&  .\\ \\
&&&  .\\
&&&&   {\text  {\tiny 0 \hspace*{-.1in}} }
 \end{array} & 0
 \\ \hline
& \renewcommand{\arraystretch}{.1}
\setlength{\arraycolsep}{.1pt}
\begin{array}{ccccc}   \\
{\text  {\tiny $\! \! \! \! \!  \! \! \! s_{r-r_b+1}$}} \\ 
\ \  \ \ \  \vspace{.2in} \ddots \\ \\
 \end{array}
  \renewcommand{\arraystretch}{1.0} 
\end{array}\right] H  & = V
\overbrace{
 \left[ \begin{array}{cccc}  
\hspace*{-.05in}
0
 \\ \hline
 \renewcommand{\arraystretch}{.1}
\setlength{\arraycolsep}{.1pt}
\begin{array}{ccccc}   \\
{\text  {\tiny  $\! \!\! \! \! \! \! \!  \! \! \! \!  s_{ {}_{r-r_b+1}}$}} \\ 
\ \  \ \ \ \ \ \vspace{.2in} \ddots \\ \\
 \end{array}
  \renewcommand{\arraystretch}{1.0} 
\end{array}\right]
}
^{S_*}
H_* = VS_*H_*, 
\end{align*}
so that $B=VS_*H_*$, where $S_*$ are the rightmost $r_b$ non-zero columns of $S$ (indexed by $i=r-r_b+1,...,r$) and $H_*$ are the corresponding rows (the bottom $r_b$)
of $H$. (To see this note that $B=V[0 \ S_*][?;H_*]$ where the ``?'' denotes rows that hit the 0 columns in $S$ so we do not care what they are.)
We point out that $H_*$ has full row rank as the rows of $H_*$ are a subset of the full row rank matrix $H$. We immediately conclude that
\[ B^\dagger = H_*^\dagger S_*^\dagger V'.\]

We further claim that
\[ PA = UC_*H_*, \]
where $C_*$ are the exact corresponding columns of $C$ (the rightmost $r_b$ indexed by $i=r-r_b+1,...,r$), which are the $c_i<1$.
To see this, first observe that the definition of $P$ as described in Section \ref{horizontal}. is $PU=U[0 \ \  I_*]$ where $I_*$ are the rightmost $r_b$ columns of the identity indexed by  $i=r-r_b+1,...,r$.
Thus $PA=U[0 \ C_*][?;H_*]= UC_*H_*$ the $0$ indicating the columns of $U$ killed by $P$.

Now that we have  compressed out the immaterial columns,  and knowing that $H_*H_*^\dagger=I_{r_b}$ by the full row rank condition, we can compute
$$PAB^\dagger  = UC_*H_*H_*^\dagger S_*^\dagger V' = UC_*/S_* V'.$$
This is a singular value decomposition of $PAB^\dagger$, with $\Sigma = C_*/S_*$ an $m_1 \times m_2$ diagonal matrix, with
the $c_i/s_i$ in decreasing order on the diagonal and no $s_i=0$.

\end{proof}

\begin{corollary}
If $B$ has full column rank ($r_b=n$)  or if the weaker condition holds that $r=rank([A;B])=r_b=rank(B)$, then $P$ is not needed, i.e., the finite non-zero generalized singular values of $(A,B)$ are the same as the non-zero singular values of $AB^\dagger$.
\end{corollary}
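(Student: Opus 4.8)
The plan is to deduce the corollary directly from Theorem~\ref{thm:pthm2} by showing that under either hypothesis the projection $P$ equals the identity on $\R^{m_1}$, so that $PAB^\dagger$ collapses to $AB^\dagger$ and the theorem's conclusion becomes precisely the corollary's assertion.

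First I would record that the two hypotheses are nested. Since $r_b = \operatorname{rank}(B) \le \operatorname{rank}([A;B]) = r \le n$, the condition $r_b = n$ forces $r = r_b = n$ and is thus a special case of the weaker condition $r = r_b$. So it suffices to treat $r = r_b$.

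Assume $r = r_b$. As recorded in Section~\ref{cands}, the number of cosines equal to $1$ --- equivalently the number of zero columns of $S$, equivalently the number of infinite generalized singular values --- is $r - r_b = 0$, so every $c_i < 1$, and the horizontal intersection of $\operatorname{span}[A;B]$ with the $X$ multiaxis (Section~\ref{infinite}) is trivial. Concretely, a dimension count gives $\dim \ker B = n - r_b = n - r = \dim(\ker A \cap \ker B)$, and since $\ker A \cap \ker B \subseteq \ker B$ these two subspaces coincide; therefore $AN = 0$. By the description of $P$ in Section~\ref{horizontal}, $P$ is the orthogonal projection onto the left nullspace of $AN$, which --- the column space of $AN$ being $\{0\}$ --- is all of $\R^{m_1}$, so $P = I_{m_1}$. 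Hence $PAB^\dagger = AB^\dagger$, and Theorem~\ref{thm:pthm2} immediately yields that the finite nonzero generalized singular values of $(A,B)$ are exactly the nonzero singular values of $AB^\dagger$. As a consistency check, with $S$ free of zero columns one also has directly $AB^\dagger = (UCH)(VSH)^\dagger = UCHH^\dagger S^\dagger V' = U(C/S)V'$, an SVD, as observed before the theorem.

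I do not expect any real obstacle here: the whole argument is the short chain $r = r_b \Rightarrow r - r_b = 0 \Rightarrow$ (no $c_i = 1$) $\Rightarrow P = I_{m_1}$, together with the remark that $r_b = n$ is subsumed under $r = r_b$. The one point meriting a moment's care is reading ``the left nullspace of $AN$'' correctly when $AN$ is the zero matrix (or, in a compact format, has no columns at all): its column space is $\{0\}$, so its orthogonal complement --- the range of $P$ --- is the whole of $\R^{m_1}$.
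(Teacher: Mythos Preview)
Your proposal is correct and follows essentially the same approach as the paper: both arguments show that under $r=r_b$ the nullspace of $B$ lies inside the nullspace of $A$, so $AN=0$ and hence $P=I_{m_1}$, after which Theorem~\ref{thm:pthm2} gives the result. Your version is slightly more explicit (the dimension count $\dim\ker B = n-r_b = n-r = \dim(\ker A\cap\ker B)$ and the observation that $r_b=n$ is subsumed under $r=r_b$), but the logical content is the same.
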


\proof
If $r_b=n$, then $B$ has nothing in the nullspace, $N$ has no columns, and  $P$ is obviously $I$.  More generally, if $r_b=r$, then $B$ has nothing in its nullspace that is
not also in the nullspace of $A$, so if $AN$ has any columns at all, it  is the zero matrix, so again projection onto the left nullspace is $P=I$.

\subsection{Blame the pseudoinverse not the GSVD}

The difficulty with $AB^\dagger$ may seem like an unfortunate consequence of infinite singular values, but in point of fact, it is related to the discontinuity
in the definition of the pseudoinverse.  If one takes a bigger picture viewpoint, it is easy to see that infinite singular values are natural limits of finite singular values.

The only truly natural discontinuity in the GSVD is the reduction of rank of $[A;B]$ which reduces the dimensionality of the hyperplane (and the rank of $H$.)

We mention some  limit type results which help understand the nature of the infinite generalized singular values:

\begin{theorem}
\label{limittheorem}
If rank($[A;B]$)=$r$, and $m_2 \ge r$, then we can define a continuous curve of matrices $[A_\epsilon,B_\epsilon]$  of the same shape as $[A;B]$
without infinite generalized singular values when $\epsilon>0$ is small but  whose limit as $\epsilon  \rightarrow 0$ continuously
converges to the generalized singular values of $[A,B]$, finite or infinite. 
\end{theorem}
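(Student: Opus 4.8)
The plan is to leave $A$ untouched and to perturb only $B$, adding a vanishing amount of ``sine'' in precisely the $r-r_b$ directions that carry the infinite generalized singular values. Start from a GSVD $[A;B]=[UC;VS]H$ as in \eqref{eqn:gsvd}, with the columns of $C,S$ ordered as in Table~\ref{tab:cs}: the left $r-r_b$ columns have $c_i=1,\ s_i=0$ (these are exactly the infinite $\sigma_i$; see Section~\ref{infinite}), and $S$ has $m_2-r_b$ zero rows. Because $m_2\ge r$, there are at least $r-r_b$ zero rows of $S$ available, so one can form an $m_2\times r$ one-diagonal matrix $S_\epsilon$ that agrees with $S$ except that it carries the value $\epsilon$ in $r-r_b$ new positions, one in each of the left $r-r_b$ columns, each sitting in a row that was identically zero in $S$. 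Set $A_\epsilon:=A=UCH$ and $B_\epsilon:=VS_\epsilon H$; then $B_\epsilon\to B$ as $\epsilon\to 0$ since $S_\epsilon\to S$, and the curve $\epsilon\mapsto(A_\epsilon,B_\epsilon)$ is real-analytic, hence continuous.

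Next I would exhibit an honest GSVD of $(A_\epsilon,B_\epsilon)$ and read off its generalized singular values. The columns of $[UC;VS_\epsilon]$ are pairwise orthogonal (orthogonality of the columns of $U$ and of $V$), column $i$ having norm $\nu_i=\sqrt{c_i^2+(s_i^{(\epsilon)})^2}$, which equals $\sqrt{1+\epsilon^2}$ in the left block and $1$ elsewhere. Writing $D_\epsilon=\diag(\nu_i)$, we get $[A_\epsilon;B_\epsilon]=[UCD_\epsilon^{-1};\,VS_\epsilon D_\epsilon^{-1}]\,(D_\epsilon H)$, in which the left factor has orthonormal columns, $CD_\epsilon^{-1}$ and $S_\epsilon D_\epsilon^{-1}$ are one-diagonal with $(CD_\epsilon^{-1})'(CD_\epsilon^{-1})+(S_\epsilon D_\epsilon^{-1})'(S_\epsilon D_\epsilon^{-1})=I_r$, and $D_\epsilon H$ still has full row rank $r$ — so this is a valid GSVD and $\mathrm{rank}([A_\epsilon;B_\epsilon])=r$, the same shape as $[A;B]$. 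Since the generalized singular values are invariants of the pair (independent of the non-uniqueness in $U,V,H$), they are the ratios $(c_i/\nu_i)/(s_i^{(\epsilon)}/\nu_i)=c_i/s_i^{(\epsilon)}$: these equal $1/\epsilon$ in the left block and the original (finite) $c_i/s_i$ otherwise. All sines are strictly positive for $\epsilon>0$, so $(A_\epsilon,B_\epsilon)$ has no infinite generalized singular value; and as $\epsilon\to 0^+$ the values $1/\epsilon$ tend to $+\infty$ while the rest are constant, so in the cotangent-on-a-semicircle picture — equivalently in terms of the angles $\theta_i\in[0,\pi/2]$ — the whole list converges continuously to the generalized singular values of $(A,B)$, infinite ones included.

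The step I expect to be the only real subtlety is the bookkeeping that $S_\epsilon$ can be chosen one-diagonal, i.e.\ with at most one nonzero per row and per column, while still containing all of $S$'s positive sines together with the $r-r_b$ new $\epsilon$'s; this is exactly what the hypothesis $m_2\ge r$ buys, since an $m_2\times r$ matrix needs $\ge r$ rows to support $r_b+(r-r_b)=r$ simultaneous nonzeros on a diagonal. Everything else — orthonormality of the perturbed $G$-factor, the renormalization by $D_\epsilon$, preservation of rank, and continuity in $\epsilon$ — is routine. A cosmetic alternative is to bake the normalization directly into the construction, taking $c_i^{(\epsilon)}=1/\sqrt{1+\epsilon^2}$ and $s_i^{(\epsilon)}=\epsilon/\sqrt{1+\epsilon^2}$ in the left block so that $[A_\epsilon;B_\epsilon]=[UC_\epsilon;VS_\epsilon]H$ is already in normalized GSVD form (at the cost of $A_\epsilon$ moving slightly off $A$); the generalized singular values, and hence the conclusion, are unchanged.
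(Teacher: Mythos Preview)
Your proof is correct and follows essentially the same approach as the paper: perturb the zero sines in the GSVD factorization to small positive values while keeping $U,V,H$ fixed. The paper's construction is precisely your ``cosmetic alternative,'' setting $c_i(\epsilon)=\cos\epsilon$ and $s_i(\epsilon)=\sin\epsilon$ in the infinite-$\sigma$ slots so that $[A_\epsilon;B_\epsilon]=[UC(\epsilon);VS(\epsilon)]H$ is already in normalized GSVD form, bypassing your $D_\epsilon$ renormalization.
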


\begin{proof}
Take
$$
\begin{bmatrix}
A_\epsilon\\
B_\epsilon
\end{bmatrix} = \begin{bmatrix}UC(\epsilon)\\VS(\epsilon)\end{bmatrix}H,$$
where 
$$
c_i(\epsilon) = \begin{cases} c_i & s_i > 0 \\ \cos(\epsilon) & s_i=0 \end{cases} \ \ \  \mbox{and } \
s_i(\epsilon) = \begin{cases} s_i & s_i > 0 \\ \sin(\epsilon) & s_i=0 \end{cases}.$$
\end{proof}

\begin{corollary}
\label{limitcor}
If rank($[A;B]$)=$r$, and $m_2 < r$, then we can define a continuous curve of matrices $[A_\epsilon,B_\epsilon]$ 
without infinite generalized singular values when $\epsilon>0$ is small but  whose limit as $\epsilon  \rightarrow 0$ continuously
converges to the generalized singular values of $[A,B]$ by row augmenting $B_\epsilon$ to contain $r$ rows.
\end{corollary}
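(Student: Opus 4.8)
The plan is to reduce the claim to Theorem~\ref{limittheorem} by padding $B$ with rows of zeros until the hypothesis ``$m_2\ge r$'' holds. Recall that when $m_2<r$ one has $r_b\le m_2<r$, so there are $r-r_b>0$ genuinely infinite generalized singular values, associated to the $c_i=1$, $s_i=0$ pairs. The only obstruction to running the construction of Theorem~\ref{limittheorem} verbatim is that, once every vanishing $s_i$ is replaced by $\sin(\epsilon)>0$, the perturbed sine matrix needs $r$ non-zero rows, which cannot be fit into an $m_2\times r$ matrix when $m_2<r$. Row augmentation is precisely what supplies the missing room.

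First I would set $\widetilde B:=[B;0]\in\R^{r\times n}$, obtained by appending $r-m_2$ rows of zeros to $B$ (possible since $m_2<r$). Appending zero rows does not change the rank, so $\mathrm{rank}[A;\widetilde B]=\mathrm{rank}[A;B]=r$, and the row count of the second block is now $\widetilde m_2:=r\ge r$.

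Next I would verify that $\mathrm{gsvd}(A,\widetilde B)=\mathrm{gsvd}(A,B)$. Starting from a GSVD $[A;B]=[UC;VS]H$, one checks that $[A;\widetilde B]=[\,UC\,;\,\widetilde V\widetilde S\,]H$, where $\widetilde V:=\diag(V,I_{r-m_2})$ is square orthogonal and $\widetilde S:=[S;0]$ satisfies $\widetilde S'\widetilde S=S'S$; hence $C'C+\widetilde S'\widetilde S=I_r$ still holds and (after the harmless relabelling of the rows of $\widetilde S$ dictated by the conventions of Section~\ref{cands}) this is a bona fide GSVD of $(A,\widetilde B)$ carrying the same cosine-sine pairs $(c_i,s_i)$, hence the same generalized singular values, finite and infinite.

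Finally I would invoke Theorem~\ref{limittheorem} on the pair $(A,\widetilde B)$, whose second-block row count $r$ is now $\ge r$, to produce the continuous curve $[A_\epsilon;B_\epsilon]:=[\,UC(\epsilon)\,;\,\widetilde V S(\epsilon)\,]H$ with $B_\epsilon\in\R^{r\times n}$, free of infinite generalized singular values for small $\epsilon>0$ and converging as $\epsilon\to 0$ to $\mathrm{gsvd}(A,\widetilde B)=\mathrm{gsvd}(A,B)$. This is exactly the asserted curve, with $B_\epsilon$ row-augmented to $r$ rows. The only real work is the bookkeeping in the previous paragraph, confirming that zero-row padding leaves all generalized singular values—the infinite ones included—unchanged; there is nothing deeper here, so I anticipate no substantive obstacle.
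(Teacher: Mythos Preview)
Your proposal is correct and matches the paper's own proof essentially step for step: append $r-m_2$ zero rows to $B$, observe that this leaves $U$, $C$, $H$ and the generalized singular values unchanged while augmenting $S$ with zero rows and $V$ with an identity block, and then invoke Theorem~\ref{limittheorem}. Your write-up is in fact slightly more explicit about the $\widetilde V=\diag(V,I_{r-m_2})$, $\widetilde S=[S;0]$ bookkeeping than the paper's, but the argument is the same.
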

\begin{proof}
Simply add $r-m_2$ rows of zeros to the bottom of $B$.  This does not change the generalized singular values of $[A;B]$ or $U$,$C$ or $H$.
$S$ is augmented with $r-m_2$ rows of zeros and $V$ is augmented with $r-m_2$ rows and columns with an identity matrix.  
Apply the construction in Theorem \ref{limittheorem} to complete the proof.
\end{proof}

\begin{example}
Consider that
\[
\mbox{\hspace*{-1in} GSVD}
\left(
\begin{bmatrix}
3 & 0 \\
0 & 4
\end{bmatrix},
\begin{bmatrix}
1 & 1 \\
\end{bmatrix}
\right)
= \ 2.4 \mbox{ and } \infty.
\]
One might seek nearby matrices with no infinite generalized singular values.
This is impossible if we insist that $B$ remain $1 \times 2$ but is
possible if we augment $B$ with one row, which in this case we can simply take
\[
\text{GSVD}
\left(
\begin{bmatrix}
3 & 0 \\
0 & 4
\end{bmatrix},
\begin{bmatrix}
1 & 1 \\
0 & \epsilon
\end{bmatrix}
\right)
= \ 2.4 +O(\epsilon^2) \mbox{ and } 5/\epsilon+O(\epsilon). 
\]
\end{example}

\begin{corollary}
Suppose $[A_\epsilon,B_\epsilon]$ has rank $r$ for $0 \le \epsilon < \epsilon_0$ is a continuous curve, where $B_\epsilon$ has rank $r$ for $\epsilon>0$
but may drop rank at $\epsilon=0$.  We then have that the generalized singular values are a continuous function of  $[A_\epsilon,B_\epsilon]$ as $\epsilon \rightarrow 0$.
\end{corollary}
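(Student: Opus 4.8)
The plan is to reduce the statement to the continuity of the eigenvalues of a symmetric matrix, by realizing the generalized singular values as a continuous function of the single subspace $W_\epsilon:=\mathrm{col}([A_\epsilon;B_\epsilon])\in G_{m,r}$, and then using that a constant-rank continuous matrix curve has a continuous column space. The first ingredient is the observation (implicit in the ``canonical basis'' remarks of Section~\ref{GH}) that the cosine--sine data $(c_i,s_i)$, equivalently the angles $\theta_i\in[0,\pi/2]$ with $\cos\theta_i=c_i$, depend only on $W:=\mathrm{col}([A;B])$ and not on the particular GSVD factorization or row-placement convention: if $Q=[Q_1;Q_2]\in\R^{m\times r}$ is \emph{any} matrix with orthonormal columns spanning $W$, partitioned conformally with $m=m_1+m_2$, then the $r\times r$ Gram matrix $Q_1'Q_1$ is the diagonal matrix carrying the $c_i^2$, and $Q_2'Q_2=I_r-Q_1'Q_1$ carries the $s_i^2$. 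Indeed, taking $Q=[UC;VS]$ gives $Q_1'Q_1=C'C$, the $r\times r$ diagonal matrix with entries $c_i^2$; and any other orthonormal basis of $W$ has the form $QO$ with $O\in\R^{r\times r}$ orthogonal, which leaves the spectrum of $Q_1'Q_1$ unchanged. Hence the multiset $\{c_i^2\}_{i=1}^r$ is a function of $W$ alone, and so are $\{\theta_i\}$ and $\{\sigma_i=\cot\theta_i\}\subset[0,\infty]$.

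The second ingredient produces a continuous orthonormal basis along the curve. Since $[A_\epsilon;B_\epsilon]$ has rank $r$ throughout $[0,\epsilon_0)$, a set of $r$ columns that is linearly independent at $\epsilon=0$ stays linearly independent (lower semicontinuity of rank), hence spans $W_\epsilon$, for all $\epsilon$ near $0$; the corresponding $m\times r$ submatrix is continuous in $\epsilon$ and of full column rank, so its thin $QR$ factor (normalized so the triangular factor has positive diagonal) gives a continuous family $Q(\epsilon)=[Q_1(\epsilon);Q_2(\epsilon)]$ of orthonormal bases of $W_\epsilon$. Alternatively one may invoke continuity of $M\mapsto M^\dagger$ on the fixed-rank locus, so that $\epsilon\mapsto[A_\epsilon;B_\epsilon][A_\epsilon;B_\epsilon]^\dagger$, the orthogonal projector onto $W_\epsilon$, is continuous.

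Combining the two ingredients, $\epsilon\mapsto Q_1(\epsilon)'Q_1(\epsilon)$ is a continuous curve of symmetric $r\times r$ matrices, so its ordered eigenvalues $c_1(\epsilon)^2\ge\cdots\ge c_r(\epsilon)^2$ are continuous; then $s_i(\epsilon)=\sqrt{1-c_i(\epsilon)^2}$ and $\theta_i(\epsilon)=\arccos c_i(\epsilon)$ are continuous on $[0,\epsilon_0)$, and finally $\sigma_i(\epsilon)=\cot\theta_i(\epsilon)$ is continuous into the compactified half-line $[0,\infty]$, since $\cot$ is a homeomorphism $[0,\pi/2]\to[0,\infty]$. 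For $\epsilon>0$ the hypothesis $\mathrm{rank}(B_\epsilon)=r$ forces $s_i(\epsilon)>0$ and hence $\sigma_i(\epsilon)<\infty$; at $\epsilon=0$ the matrix $B_0$ may drop rank, so some $s_i(0)=0$ and $\sigma_i(0)=\infty$, and the continuity just established is exactly the assertion that $\sigma_i(\epsilon)\to\infty$ for those indices as $\epsilon\to0^+$. This complements Theorem~\ref{limittheorem} and Corollary~\ref{limitcor}, which run an analogous construction in the other direction.

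The step I expect to be the main obstacle is the first one: making rigorous that the \emph{entire} cosine/sine spectrum is a function of the subspace $W$ — so that nothing depends on $H$, on $A$ and $B$ separately, or on which row placement of $S$ one uses — and verifying this uniformly across all rank regimes (whether or not $r$ exceeds $m_1$, and whether or not it exceeds $m_2$, where extra $c_i=0$ or $s_i=0$ are forced by Table~\ref{tab:cs}). Everything downstream is the soft fact that eigenvalues of symmetric matrices and $QR$ factors of full-column-rank matrices depend continuously on their entries; carrying the argument through the $r\times r$ Gram matrix $Q_1'Q_1$ and the angles $\theta_i$, rather than through $C$, $S$, or the possibly infinite $\sigma_i$, is what keeps it clean and sidesteps the discontinuity of the pseudoinverse discussed in Section~\ref{horizontal}.
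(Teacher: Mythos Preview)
Your proof is correct and follows the same underlying idea as the paper---that the generalized singular values depend only on the column space of $[A;B]$, which varies continuously along a constant-rank curve---but you have supplied a genuine argument where the paper offers only a one-line assertion (``the only true discontinuity in the gsvd is the potential for a drop in rank of $[A;B]$\ldots thus the limit of the column space is the column space of the limit''). One minor wording slip: for a \emph{general} orthonormal basis $Q$ the matrix $Q_1'Q_1$ is not literally diagonal, only orthogonally similar to $C'C$; your very next sentence makes clear you mean its spectrum, so the logic is unaffected.
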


\begin{proof}
The only true discontinuity in the GSVD is the potential for a drop in rank of $[A;B]$.  This is avoided in the statement by keeping  $[A_\epsilon,B_\epsilon]$  rank $r$.
Thus the limit of the column space is the column space of the limit.
\end{proof}

We do remark on the other hand that if $[A_\epsilon,B_\epsilon]$ drops rank, then we can only say that the limit of the column space contains  the column space of the limit, which
can lead to all kind of discontinuities in the generalized singular values.

\section{GSVD Applications and their Geometric Interpretations}
\label{sec:applications}
\subsection{Geometry of Tikhonov Regularization}

\subsubsection{The two cosine damping}

We show how geometry can add insight to our understanding of Tikhonov Regularization:
\begin{equation}
\label{eqn:tr}
  \min_{x} \left\{ \|Ax - b\| + \lambda\cdot\|Lx\| \right\} 
\end{equation} 
by providing a two cosines view of damping.
Specifically,  the way Tikhonov regularization reduces the solution or  ``weights,'' is usually
understood algebraically in terms of adding a regularizer term that moves the original problem away
from some kind of ill-conditioned setting.  We will show that, in Figure~\ref{fig:tik2d}, one cosine comes from the projection
from the horizontal (blue) plane to the span of $[A;\lambda L]$ red plane. The other cosine comes from
the non-canonical basis of the plane: the columns of $[A;\lambda L]$ which elongate with $\lambda$, hence the coordinates shrink.

While the ``calming influence''~\cite[Section 6.1.26]{golub2012matrix},~\cite[Section 4.4]{un1992csd},~\cite{hansen1989regularization}
of the regularization parameter $\lambda$ 
has
been well studied algebraically, we identify geometrically  in (\ref{eqn:xlam}) the influence as a factor of $\cos^2 \theta_\lambda$ where $\tan \theta_\lambda = \lambda \tan \theta_1$ 
so that $\cos^2 \theta_\lambda =  1/(1+\lambda^2  \tan^2 \theta_1)$, where $\theta_1$ is the angle that corresponds to $\lambda = 1.$ 
We will compare the $\cos^2$ formulation with previous formulations explaining why we find that  this formulation feels somewhat more insightful.

Before we start, let us recap Tikhonov regularization. Suppose we have a matrix $A$, which we will assume has full column rank.  
The $\lambda=0$ problem (standard least squares) is the computation of 
$x_0= A^\dagger  b = (A'A)^{-1}A'b, $
the standard solution to the normal equations $A'Ax =A'b.$ To regularize we pick a suitable matrix $L$, and a ``regularization parameter'' $\lambda$,
and then solve instead
$(A'A + \lambda^2 L'L)x=A'b,$
which is equivalent to computing
\[x_\lambda = \left[ \begin{array}{c} A \\ \lambda L  \end{array} \right]  ^\dagger
 \left[ \begin{array}{c} b \\ 0  \end{array} \right].
\]
From the geometrical point of view, we believe the reformulation in Theorem \ref{tiktheorem} below is more revealing of the ``calming effect.'' Figure~\ref{fig:tik2d} demonstrates the hyperplane onto which $[b;0]$ gets projected for varying $\lambda.$

 \begin{figure}[htp]
\includegraphics[width=.9\textwidth]{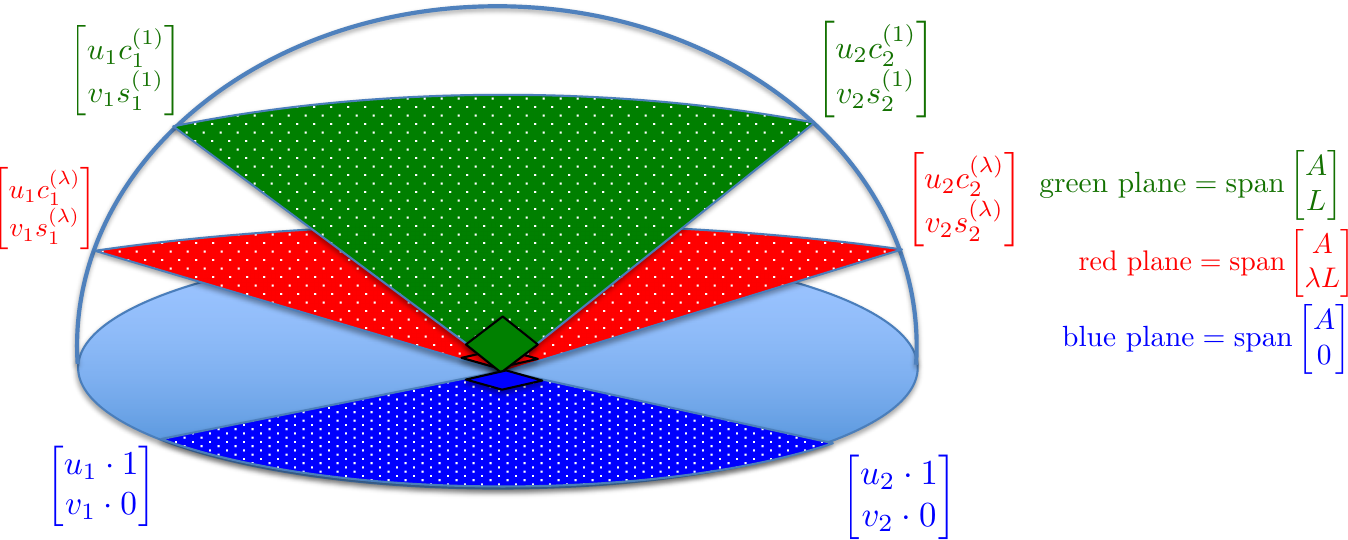}
 \caption{This n=2 Tikhonov regularization  picture in the four dimensional hypersphere illustrates the hyperplanes onto which $[b;0]$ gets projected for varying $\lambda$.  The projection gives one cosine, while the representation (not pictured)  in ever elongating bases gives the second cosine.  Portrayed is the unique hypersphere containing the four mutually orthogonal vectors in four dimensions: $[u_1,0],[u_2,0],[0,v_1],[0,v_2]$,
While tempting to see this as a 3d object, as $\lambda \rightarrow \infty$ the wedge drawn does not shrink but remains a quarter circle wedge.
}
 \label{fig:tik2d}
 \end{figure}

For every $\lambda$, we obtain the GSVD as a continuous function of $\lambda$: 
\[
\begin{bmatrix}
A\\
\lambda L 
\end{bmatrix} = 
\begin{bmatrix}
U C_\lambda \\ 
V S_\lambda 
\end{bmatrix} H_\lambda,
\]
where it is easy to check that $H_\lambda$ is square non-singular.
It is convenient to use the compact format described in Section \ref{compactformat} here.  Thus we take $U$ to be $m_1 \times n$, $C$ and $S$ to be square
diagonal $n \times n$.  The exact values in $C$ and $S$ come from the trigonometry with unit hypotenuse, fixed base, and sliding
height of a $c,s,1$ triangle at $\lambda=1$,  as shown in the left side of Figure \ref{fig:tik}. Namely
\[C_\lambda = \frac{C_1}{\sqrt{C_1^2+\lambda^2 S_1^2}} \ \ \mbox{ and } \ \ S_\lambda = \frac{\lambda S_1}{\sqrt{C_1^2+\lambda^2 S_1^2}},\]
where the operations happen on the diagonal. It also follows that 
\[H_0=C_\lambda H_\lambda,\ \text{and}, \ A=UH_0 = UC_\lambda H_\lambda,\ \forall\lambda\ge 0.\]

The equation $H_0 = C_\lambda H_\lambda$  has a nice trigonometric interpretation.  As the column vectors of $[A;\lambda L]$ grow in length
(these lengths are encoded in $H_\lambda$). the cosines in $C_\lambda$ relate back to the $[A;0]$ columns which are shorter in length. This is depicted
in Figure \ref{fig:tik}.

\begin{theorem}
\label{tiktheorem}
The solution $x_\lambda$ to the Tikhonov Regularization problem  can be written as 
\begin{equation}
\label{eqn:xlam}
x_\lambda = \left(  H_0^{-1} C_\lambda^2 \ H_0  \right) x_0, 
\end{equation}
where $x_0$ is the least squares solution to $Ax=b$ and $A=UH_0$, where $[A;\lambda L]=[UC_\lambda;VS_\lambda]H_\lambda.$
\end{theorem}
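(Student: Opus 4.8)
\emph{Proof proposal.} The plan is to write all of $x_0$, $x_\lambda$, and the conjugating operator $H_0^{-1}C_\lambda^2 H_0$ in terms of pseudoinverses of products of the shape ``matrix with orthonormal columns times square nonsingular matrix,'' and then let the reverse-order law $(MN)^\dagger = N^\dagger M^\dagger$ do the bookkeeping. I would work throughout in the compact format of Section~\ref{compactformat}: here $U$ is $m_1\times n$ with $U^TU=I_n$, the matrices $C_\lambda,S_\lambda$ are $n\times n$ diagonal with $C_\lambda^2+S_\lambda^2=I_n$ so that $G_\lambda := [UC_\lambda;VS_\lambda]$ satisfies $G_\lambda^TG_\lambda=I_n$, and $H_\lambda$ is square nonsingular. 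Since $A$ is assumed to have full column rank, $r=r_a=n$, hence every $c_i(\lambda)>0$, so $C_\lambda$ is invertible and $H_0=C_\lambda H_\lambda$ is invertible as well.

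From $A=UH_0$ with $U$ of full column rank and $H_0$ of full row rank, the reverse-order law gives $A^\dagger=H_0^{-1}U^T$, hence $x_0=H_0^{-1}U^Tb$. From $[A;\lambda L]=G_\lambda H_\lambda$ with $G_\lambda$ of full column rank and $H_\lambda$ nonsingular, the same law gives $[A;\lambda L]^\dagger=H_\lambda^{-1}G_\lambda^T$; and since $C_\lambda$ and $S_\lambda$ are diagonal, hence symmetric, $G_\lambda^T[b;0]=C_\lambda U^Tb$. Therefore $x_\lambda=H_\lambda^{-1}C_\lambda U^Tb$.

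To finish I would invoke the relation $H_0=C_\lambda H_\lambda$ already recorded above, i.e.\ $H_\lambda^{-1}=H_0^{-1}C_\lambda$, which turns the last expression into $x_\lambda = H_0^{-1}C_\lambda^2 U^Tb = H_0^{-1}C_\lambda^2 H_0\,(H_0^{-1}U^Tb)=(H_0^{-1}C_\lambda^2 H_0)x_0$, as claimed. The one genuine obstacle is justifying the two uses of $(MN)^\dagger=N^\dagger M^\dagger$, which is valid precisely when $M$ has full column rank and $N$ has full row rank; I would therefore make explicit that the full-column-rank hypothesis on $A$ is exactly what guarantees $r=n$, the nonsingularity of $C_\lambda$ and $H_0$, and the legitimacy of the compact-format sizes used throughout. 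No separate least-squares residual argument is needed, since $x_\lambda = [A;\lambda L]^\dagger[b;0]$ is by definition the minimizer of the regularized problem.
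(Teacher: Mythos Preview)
Your proof is correct and follows essentially the same line as the paper's: both use the compact GSVD format, the pseudoinverse identity $(MN)^\dagger=N^{-1}M^T$ for $M$ with orthonormal columns and $N$ nonsingular, and the key relation $H_\lambda^{-1}=H_0^{-1}C_\lambda$. The only cosmetic difference is that the paper begins by writing $x_\lambda=[A;\lambda L]^\dagger[A;0]\,x_0$ (implicitly using that the least-squares residual $[b-Ax_0;0]$ lies in the left nullspace of $[A;\lambda L]$), whereas you compute $x_0=H_0^{-1}U^Tb$ and $x_\lambda=H_\lambda^{-1}C_\lambda U^Tb$ separately in terms of $U^Tb$ and then combine; the algebra after that point is identical.
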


\begin{proof}
Since
$$x_\lambda = \left[ \begin{array}{c} A \\ \lambda L  \end{array} \right]  ^\dagger
 \left[ \begin{array}{c} A \\ 0  \end{array} \right]  x_0,$$
 we can calculate
 
 $$x_\lambda = H_\lambda^{-1}C_\lambda U' U H_0 x_0 = H_\lambda^{-1}C_\lambda H_0 x_0$$
 and use the relation
 $H_\lambda^{-1} = H_0^{-1}C_\lambda$ to complete the proof.
\end{proof}

\begin{figure}[ht]
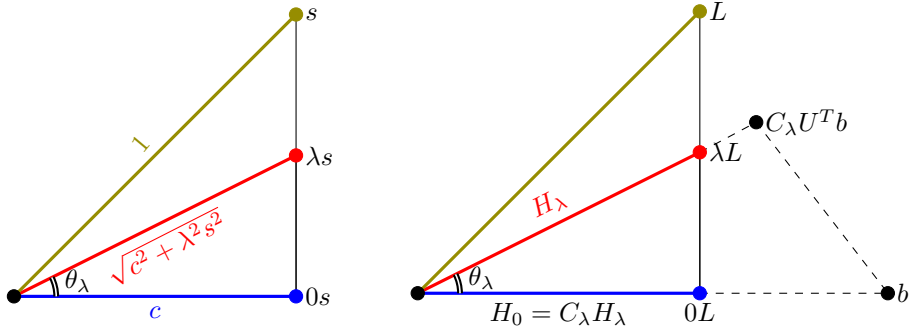

\begin{minipage}{.35\textwidth}

        \centering
        \drawTikonov
    \end{minipage}%
\begin{minipage}{.65\textwidth}
        \centering
        \drawTikonovGeneral
    \end{minipage}%
    \caption{The ``two cosine'' Geometric interpretation of Tikhonov regularization:  Single u-v plane (left) vs. general (right). The green, red and blue lines represent the span of $[A; L]$ (green) , $[A;\lambda L]$ (red) and $[A;0L]$  (blue) respectively.  Our ``two cosines'' view of regularization is that one cosine dampening comes from the projection of $b$ from the blue plane to the red plane, and the second
    cosine comes from the extended basis $H_\lambda = C_\lambda^{-1} H_0$ which gets divided.  
  Note that the value of $\lambda$ may be greater than 1  (not shown).}
\label{fig:tik}
\end{figure}

\paragraph{Comparison and Discussion}

The standard application of the GSVD to Tikhonov relates $x_\lambda$ to $b$ and thus gives formulas involving the non-physical, non-homogeneous factor of
$c/(c^2+\lambda^2 s^2)$ rather than the homogeneous $c_\lambda^2 = c^2/(c^2+\lambda^2 s^2)$.

The formulation in  Theorem  \ref{tiktheorem} diagonalizes the operator that relates  $x_\lambda$ to $x_0.$ We understand that when $x$ are the coordinates of a linear combination of the columns of $[A;B]$,
we have that $H_0x$ are the coordinates of that same vector in the natural basis.  Thus the interpretation of
$H_0^{-1}C_\lambda^2H_0$ simply is:
\begin{enumerate}
\item Write the vector in the natural coordinate system;
\item Multiply by a cosine squared in every natural direction;
\item Return to the original coordinate sytem.
\end{enumerate}

\subsection{Humans vs Yeast: Comparative Data Modeling}

In a series of beautiful applications of the GSVD, Alter, et.al.\ \cite{alter2003generalized,ponnapalli2006higher,ponnapalli2011higher,sankaranarayanan2015tensor,aiello2018mathematically} propose an approach towards data reconstruction
and classification.  In their case~\cite{alter2003generalized}, the $A$ and $B$ are two DNA microarrays, one from humans and the other from yeast.
The rows of $A$ and $B$ live in $\R^n$ or gene space.  The rows of $H$ form a basis for this row (or gene) space, and are denoted
genelets.  A natural question is whether the genelet is primarily human, primarily yeast, or a mixture.  In general, given two matrices
with equal columns, one wants to classify the basis vectors in the rows of $H$ according to its source.

The GSVD provides a natural solution
by creating  a single coherent model from the two datasets recording different aspects of interrelated phenomena by simultaneously identifying the similar and dissimilar between the two corresponding column-matched but row-independent matrices.
 For each of the $r$ rows, we have that  $\theta_i$ denotes the angle towards $A$.
In Figure \ref{fig:pi5}, we portray this.  We  note that \cite{alter2003generalized} displays the angles from $-\pi/4$ to $\pi/4$, but we will stick with the $0$ to $\pi/2$ convention.
It is convenient that the rows of $H$ are already sorted from ``mostly $A$,'' to ``mostly $B$.''  

Our ellipse picture Figure \ref{fig:elip} reveals the geometry readily.  The $[u_ic_i;v_is_i]$ all appear on the unit ball.

The comparative Data Reconstruction equation is
$$\begin{bmatrix} A \\ B \end{bmatrix} =  \sum_{i=1}^r \begin{bmatrix}  u_i c_i \\ v_i s_i \end{bmatrix} h_i',$$
where $h_i'$ is the $i$-th row of $H$. (This is exactly Equation (\ref{gheq}).)
One can preprocess $H$ so that each row is of unit direction as it is only the ratio of $c_i$ to $s_i$ that matters.
Any  ill-conditioning of $H$ could be worrisome.

\begin{figure}[ht]
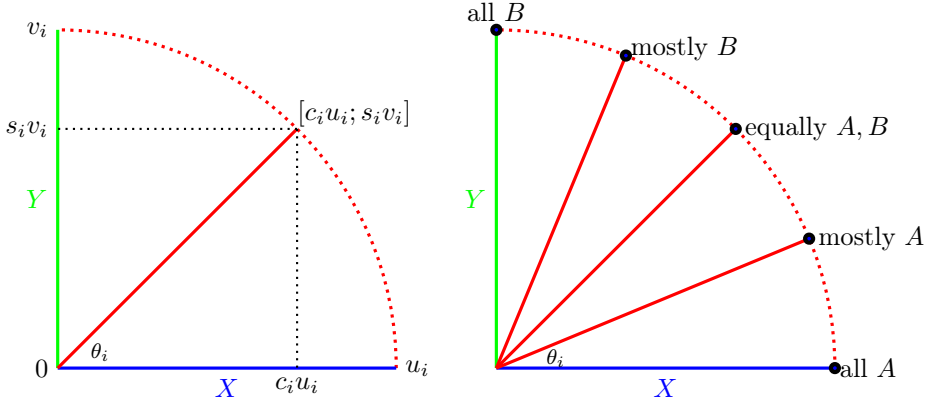

    \centering
    \drawGraph        
  \drawAnglePlot     
  \caption{
Are genomes human, yeast, or a combination?   (Application from Alter  \cite{alter2003generalized})
Left: $[c_iu_i;s_iv_i]$ makes an angle $\theta_i$  with the $X$ multiaxis.  Right: Depending on the angle we apportion the $i$th row of $H$ ( a basis element for the row spaces of $A$ and $B$) as being attributable to $A$ or $B$. }
\label{fig:pi5}
\end{figure}

\subsection{Signal vs. Noise: A one matrix and one subspace view of the GSVD}
\label{onematrix}
The focus on two matrices with the same number of columns is not always the best view of the GSVD.  One can take rather a single
$m \times n$ matrix $M$ and any $m_1$ dimensional reference subspace ${\cal S}$ of $\R^m$.  We can then think of the GSVD as an
additive decomposition:
$$M=P+Q,$$
where $P=Y_1UCH$ and $Q=Y_2VSH$, and  the columns of $Y_1,Y_2$ are orthonormal bases for ${\cal S}$ and ${\cal S}^\perp$ respectively.
Conversely, $[Y_1 \ Y_2]'M= [Y_1'M;Y_2'M]$ is an ordinary GSVD.

By doing this we have a decomposition of $M=P+Q$ such that $P'Q=Q'P=0_{n \times n}$.  Geometrically, instead of decomposing into a ``top half'' and ``bottom half,'' into
a ``horizontal'' and ``vertical'' multiaxis subspace, we are rather allowing for general multiaxes subspaces.
One might think of this as a rotated view of Figure \ref{fig:elip}.  More specifically, most of this paper would take $Y_1=[I;0]$ and $Y_2=[0;I]$, but all that is required is that $Y_1$ and $Y_2$ 
are orthogonal complements.

This geometrical insight underlies an additive decomposition signal processing application found in
\cite{hundley2001solution,hundley2002blind} where $P$ and $Q$ play the role of signal + noise.

\begin{figure}
\hspace*{.8in}
\includegraphics[width=.6\textwidth, angle=30]{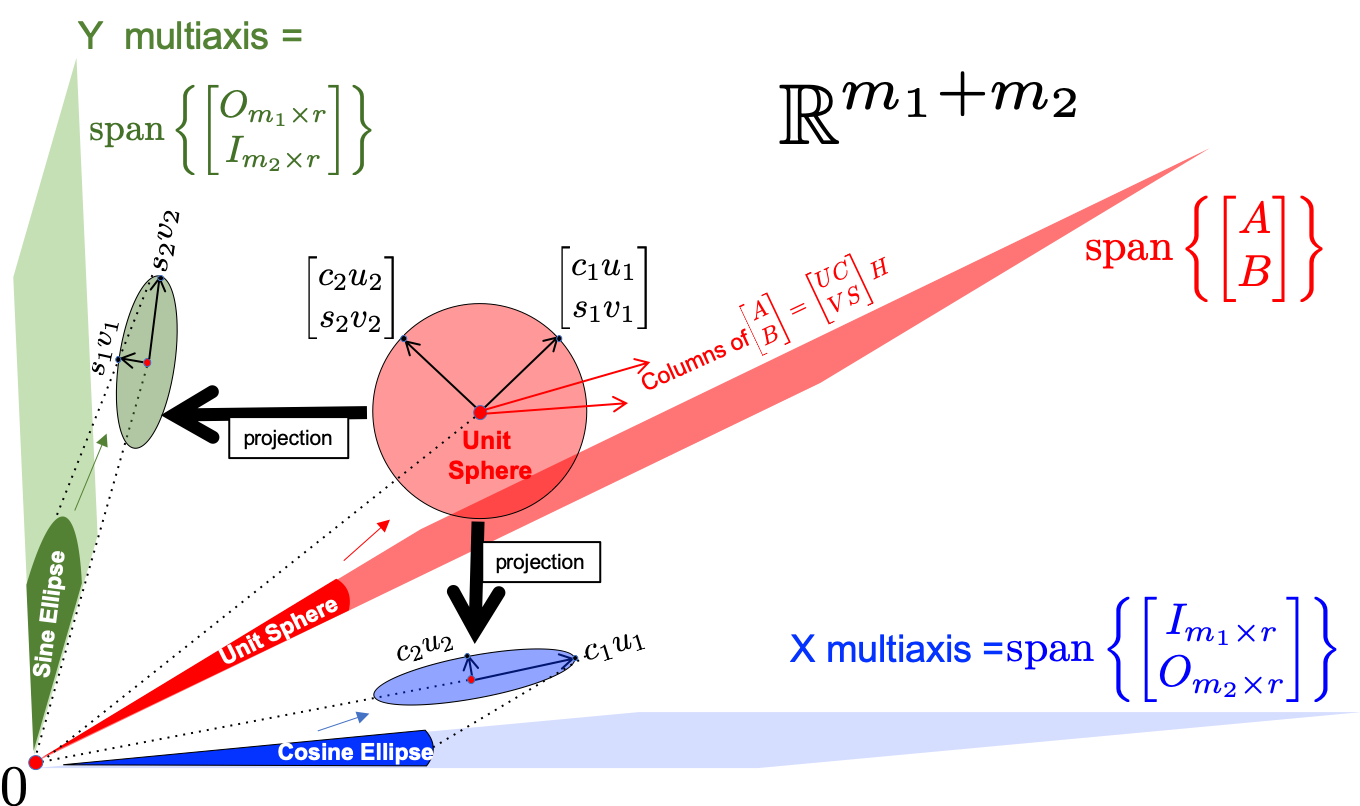}
\caption{\label{fig:rotate}The ellipse picture in Figure \ref{fig:elip} need not fundamentally line up with horizontal and vertical multiaxes. This rotated geometry underlies a signal processing application in \cite{hundley2001solution,hundley2002blind}.}
\end{figure}

\subsection{\texorpdfstring{Orthonormal Bases for $\{Ax : Bx=0\}$ and Friends}{}}

The $U$ matrix of the GSVD provides, in its columns, orthonormal bases for three mutually orthogonal subspaces that arise in many applications:
$$U = \left[ \begin{array}{c|c|c} 
\rule{0in}{.2in} && \\
U_1= & U_2= & U_3= \\
\mbox{orthonormal} & \mbox{completion  to}& \mbox{orthonormal} \\
\mbox{basis for} & \mbox{all of  }   & \mbox{basis for } \\
\{Ax : Bx=0\} & col(A) = \{Ax\} &   col(A)^\perp  \\
\rule{0in}{.2in} && \\
\end{array}\right].
$$
The ``completion'' referred to in the above equation means that taken together, the columns of $U_1$ and $U_2$ form and orthonormal basis for
col($A$). 
From the perspective of Figure \ref{fig:elip}, there are the horizontal directions in the red unit sphere, the generic directions, and the directions that are not present.

\subsubsection{Clustering Matrices}

An important example where the GSVD lurks implicitly or explicitly is clustering.  We will consider an $A$ matrix that indicates the clustering, and a $B$ matrix that indicates
equality of data between the clusters.

We consider data in $\R^p$
and assume a partitioning of $p = p_1+\ldots+p_k$,
into clusters.
The indicator matrix corresponding to the partition of $p$ is :
$$
A = \mbox{Indicator($p_1$,$p_2$,\ldots,$p_k$)} \ =
\begin{array}{c}
\\[-.2in]
 \raisebox{0in}{ $\begin{array}{c} | \\ p_1 \\ | \end{array}$}  \\[.25in]
 \raisebox{0in}{ $\begin{array}{c} | \\ p_2 \\ | \end{array}$}  \\[.2in]
\  \vdots \\[.05in]
   \raisebox{0in}{ $\begin{array}{c} | \\ p_k \\ | \end{array}$} \\[.2in]
  \end{array}
\! \! \! \!
\overbrace{
\left[
\begin{array}{ccccc}
1 &  \cr
 \vdots \cr
1 \cr
& 1 \cr
 & \vdots \cr
& 1 \cr
 && \vdots \cr
&&& 1 \cr
 &&& \vdots \cr
&&& 1 \cr
\end{array}\right]}^k
,
$$
 which we can normalize by setting
$$  Y_1 =  \mbox{Indicator($p_1$,$p_2$,\ldots,$p_k$)} \times  \mbox{Diagonal}( \frac{1}{\sqrt{p_1}},        \frac{1}{\sqrt{p_2}}       ,  \hdots ,  \frac{1}{\sqrt{p_k}}  ).$$
In the Julia computing language, the indicator matrix can be generated succinctly with \verb+A = cat(ones.(Int,partition)...,dims=1:2)+, where \verb+partition+ denotes the vector
$[p_1,\ldots,p_k]$.

The other useful matrix in this context is the constraint matrix whose nullspace is the all ones vector:

$$
B =
\begin{array}{c}
\\[-.1in]
 \raisebox{0in}{ $\begin{array}{c} {\rule{.005in}{.25in}} \\ k-1 \\ \mbox{rows} \\  {\rule{.005in}{.25in}} \end{array}$}  \\[.2in]
  \end{array} 
  \! \! \! \! 
  \overbrace{
\left[ \begin{array}{cccccccc}
1  & 0 & 0 &  \ldots & 0 & 0 & -1 \\
0 & 1  & 0 & \ldots & 0 & 0& -1  \\
\vdots & \vdots & \vdots & \ddots & \vdots & \vdots  & \vdots \\
0 & 0 & 0 & \ldots & 1 & 0 & -1  \\
0 & 0 & 0 & \ldots & 0 & 1 & -1  \\
 \end{array} \right]}^{ \mbox{ $k$ columns}} .
$$

In Julia, with the LinearAlgebra package, this may be written succinctly as  \newline \verb+B = [I -ones(k-1)]+.

Given an $m \times p$ data matrix $D$ there are a number of ``scatter matrices'' that arise that allow us to
compare between clusters and within clusters. Following roughly the notation in   \cite{howland2003structure},
we can partition the data
$$D = [D_1  \ldots D_k], \mbox{ where } D_i \in \R^{m,p_i} \mbox{ and } \sum p_i =p.$$
Let $d_j$ be the $j$th column of $D$ and let $N_i$ denote the column indices in column $i$, $c_i$ is the mean of the columns in cluster $i$, and $c$ is the mean of all the
columns.  The within, between, and mixed scatter matrices are defined as
\begin{align*}
    S_w &= \sum_{i=1}^k  \sum_{j=1}^{N_i}  (d_j - c_i)(d_j-c_i)' \\
    S_b &= \sum_{i=1}^k p_i   (c_i-c)(c_i-c)' \\ 
    S_m &= \sum_{j=1}^n (d_j- c)(d_j - c)' .
\end{align*}
These scatter matrices are readily calculated through the $U$ matrix for the GSVD, one can then set \verb+U, = SVD(A,B)+, where the comma indicates that
we are requesting only the $U$ matrix. We then have that,
\[\ \ \ \ \  \ \ \ \  \  \ \ \ \ \ \ \overbrace{\rule{.4in}{0in}}^{ \mbox{ $1$ col}} 
 \overbrace{\rule{1.3in}{0in}}^{ \mbox{ $k-1$ columns}}
  \overbrace{\rule{1.1in}{0in}}^{ \mbox{ $p-k$ columns} } $$ 
$$U = 
\begin{array}{c}
\\[-.1in]
 \raisebox{0in}{ $\begin{array}{c} {\rule{.005in}{.25in}} \\ p \\ \mbox{rows} \\  {\rule{.005in}{.25in}} \end{array}$}  \\[.2in]
  \end{array} 
  \! \! \! \! 
 \left[ \begin{array}{c|c|c} 
\rule{0in}{.2in} U_1=&& \\
1/\sqrt{p} & U_2= & U_3= \\
1/\sqrt{p} & \mbox{completion  to}& \mbox{orth basis for } \\
 \vdots & \mbox{``between'' clusters  }   & \mbox{``within'' clusters} \\
1/\sqrt{p} & &     \\
\rule{0in}{.2in} && \\
\end{array}\right].
\]
``Completion'' means that $U_1$ and $U_2$ form an orthonormal basis for $A$.
The third block is an orthonormal basis for $A^\perp$.
The ``between'' and ``within'' terms are statistics jargon.  Given a data vector, the first column extracts the normalized mean.  
The next block gives a basis for clustered vectors that are mean-free which by removing the fine details within cluster provides
a way to compare between clusters.  The last block provides the within cluster details.  The number of columns is the dimension
of the space, and in statistics jargon is known as the ``degrees of freedom.''  (See ~\cite[Chap.\ 10]{muirhead}.)

The scatter matrices can be calculated in terms of $U$ using these formulas
\begin{align*}
    S_w &= D (U_3 U_3') D' \\
    S_b &=  D (U_2 U_2') D' \\
    S_m &=  D (I - U_1U_1') D' . 
\end{align*}
One recognizes that the matrices in parentheses in the three expressions above are projection matrices and the orthogonality of $U$ guarantees that $S_w + S_b = S_m$.

\subsubsection{One Way ANOVA made simple}
\label{oneway}
A commonly used statistics test is to decide whether a  proposed clustering of a vector $v$ is justified.  The test takes the average (meaning divide by $k-1$) square component in the $U_2$
direction and divides it by the average (meaning divide by $p-k$) square component in the $U_3$ direction. The following Julia code shows how compactly one can reproduce an  example
from Wikipedia where one can quickly obtain the number computed in Step 5 of  \url{https://en.wikipedia.org/wiki/One-way_analysis_of_variance#Example}.

\vspace{.2in}

{
\hspace{.1in}
\begin{jinput}
\small
\begin{verbatim}
using LinearAlgebra
v = [6,8,4,5,3,4,8,12,9,11,6,8,13,9,11,8,7,12]    # data vector 
A = cat(ones.([6,6,6])...,dims=1:2)               # Indicator(6,6,6)
B = [1 0 -1; 0 1 -1]                              # Constraint matrix
U,= SVD(A,B)                                      # GSVD 
(norm(U[:,2:3]'v)/norm(U[:,4:18]'v))^2 * 15/2     # The F value
\end{verbatim}
\end{jinput}
\small
\begin{verbatim}
     9.264705882352956
\end{verbatim}
}

\vspace{.2in}

While for this problem the classic approach is fine as an algorithm, for general tests for  being in the column space of $A$ but orthogonal to $\{Ax:Bx=0\}$, the GSVD is worth
considering algorithmically and  how we are projecting into the non-horizontal directions is worth understanding geometrically.

\subsubsection{See a slope? Generalize to a GSVD}
\label{seeslope}

In the last line of the above code snippet, the innocent looking 
\begin{verbatim}
                norm(U[:,2:3]'v)/norm(U[:,4:18]'v)
\end{verbatim}
for an orthogonal matrix $U$
carries a message of generalization if you know how to read it. It is a ratio of components in two
orthogonal directions.   You can call it a slope, or a cotangent, or a tangent.  What we called horizontal and vertical multiaxes
in Figure \ref{fig:elip} may now be labeled in this coordinate system: the between and within axes, following the aforementioned statistics nomenclature.

The generalization of the vector $v\in \R^p$ example of Section \ref{fig:elip} is  a $p \times n$ matrix $M$ of data, each data item being one row of length $n$.
It is therefore natural geometrically to consider and interpret the GSVD as
$$
\begin{bmatrix}
U_2'M\\
U_3'M
\end{bmatrix} = \begin{bmatrix}
U_bC\\
U_wS
\end{bmatrix}\cdot H.
$$
The result is $n$ canonical directions for considering between vs within as naturally as comparing human vs yeast, or signal vs noise as we have
seen in previous applications. The multislope, i.e. the generalized singular values (or perhaps we can call this the ANOVA structure)  is $0$ in all but at most $k-1$ directions, owing to the number of columns in $U_2$.

\subsubsection{Discriminant Analysis  Dimension Reduction}
Continuing with the idea in Section \ref{seeslope}.
we observe that it is natural to reduce out  all but the  $k-1$ nonzero ANOVA directions by multiplying
$M$ on the right  by $G = H^\dagger  I_{r,k-1}$ or (for that matter any matrix whose columns span the same subspace of $\R^n$.).  

The reduction to $k-1$ columns
$$[U_2'M;U_3'M] \approx_{\mbox{reduction}}   [U_2'M;U_3'M]G,$$
can be rotated back to the standard coordinate system without any change to the nonzero generalized singular values  (the ANOVA structure) to yield
$$  [U_2\ U_3]   [U_2'M;U_3'M]G =  \left( U_2U_2'M + U_3 U_3'M \right) G  = (I-U_1 U_1') MG,$$
since $UU'=I$. We can reduce the mean also by adding back  $U_1U_1'G$ producing our final reduction, $MG.$

Our simple summary is that for a data matrix $M$, ANOVA measures the nonzero generalized singular values in $[U_2';U_3']M$, a rotated multiaxis system which gives the ratios of the ``between" to the ``within",  and these are the same as for the reduced data matrix $MG$
because we are suppressing the directions with $0$ generalized singular values.

This is a geometrical derivation of an idea and algorithm presented by Park and others \cite{howland2003structure} with a minimization approach.
In their algorithm $G$ can be derived efficiently as the first $k-1$ columns of the $Q$ from the GSVD, and the authors point out that the GSVD
idea is robust even in the case of too little data.

\subsection{The Jacobi Ensemble from Random Matrix Theory is a GSVD} 
Classical random matrix theory centers are Hermite, Laguerre, and Jacobi ensembles.  Historically, they are presented in eigenvalue format, but we have argued that the eigenvalue, SVD, GSVD formats, respectively, are mathematically more natural providing simpler derivations and
clearer insights.
Suppose we have two Gaussian random matrices $A$ ($m_1\times n$) and $B$ ($m_2 \times n$) with $m_1 \geqslant n$ and $m_2 \geqslant n$. For example, \texttt{A=randn(m1,n)} and \texttt{B=randn(m2,n)} using Julia notation. The so-called \emph{MANOVA matrix} (Multivariate Analysis of Variance) is defined
to be 
\begin{equation}
\label{eqn:manova}
(A'A+B'B)^{-1}A'A
\end{equation}
 or in the symmetric form $(A'A+B'B)^{-1/2}A'A(A'A+B'B)^{-1/2}.$
The eigenvalues are the squares of the cosines ($c_i^2$) and are jointly distributed as~\cite{muirhead}
\begin{equation}
c\cdot\prod_{i<j}|\lambda_{i}-\lambda_{j}|^\beta\prod_{i=1}^n\lambda_{i}^{a_1 - p}(1-\lambda_{i})^{a_2 - p},
\end{equation}
where $a_1 = \frac{\beta}{2}m_1, a_2 = \frac{\beta}{2}m_2$ and $p=1 + \frac{\beta}{2}(n-1)$, 
\[
c = \prod_{j=1}^n\frac{\Gamma(1+\frac{\beta}{2})\Gamma(a_1+a_2-\frac{\beta}{2}(n-j))}{\Gamma(1+\frac{\beta}{2}j)\Gamma(a_1-\frac{\beta}{2}(n-j))\Gamma(a_2-\frac{\beta}{2}(n-j))},
\]
where $\beta=1$ for real matrices, $\beta=2$ for complex matrices, $\beta=4$ for quaternion matrices, and general $\beta$ is worth considering, as in~\cite{edelman2010random} . The eigenvalue distribution is known as the \emph{Jacobi ensemble}, which was first referred by name in~\cite{leff1964class}. We refer interested readers to~\cite{edelman2018random}, where the geometrical picture (a simplified version of the ellipse in Figure~\ref{fig:elip}) motivates a direct derivation of the joint density of the Jacobi ensemble. Note that, the direct derivation in~\cite{edelman2018random} fills in a gap stated in Remark 2.3
of~\cite{grinberg2004radon}, where an indirect proof using the Fourier Transform is presented, but a direct proof without the Fourier Transform is desired. An earlier alternative direct proof is due to~\cite{zhang2007radon}.

\section{Mathematical Software}
Suppose one looks up the GSVD in the help pages of your favorite technical computing
language, shown in Table~\ref{tab:GSVD} and the Julia version in Table~\ref{tab:GSVD_julia}. One gets lost in a sea of matrices whose meaning is very hard to
fully appreciate. Surprisingly, we find no standard function for the GSVD in Python (NumPy and SciPy)
though there is some discussion on StackOverflow~\cite{stackoverflow} and Github Numpy issue \#3475\footnote{\url{https://github.com/numpy/numpy/issues/3475}} and scipy issue \#743\footnote{\url{https://github.com/scipy/scipy/issues/743}} and \#1491\footnote{\url{https://github.com/scipy/scipy/issues/1491}}. 

\section{Acknowledgments}

We thank  Orly Alter, Zhaojun Bai, Michael Kirby, Andreas Noack, Chris Paige, Haesun Park, Sri Priya Ponnapalli, Charlie van Loan,  Sabine van Huffel, and Joos Vandewalle for interesting conversations about the GSVD theory, software, and feedback from lectures at the 2017 Householder Symposium, and 2018 SIAM Applied Linear Algebra meeting. We thank Sungwoo Jeong for finding two references in the literature that come close
to the Grassmann viewpoint for the GSVD~\cite{herz1955bessel,grinberg2004radon}. 

We also wish to acknowledge and remember the late Gene Golub, over a decade since his passing, who so effectively promoted the singular value
decomposition.  We remember a time, not so long ago, when the SVD was unheard of outside of  numerical linear algebra circles, and eigenvalues 
were all that were known.  Then like dominos falling, one field after another, biology, economics, fields of engineering, statistics,  computer science, and yes pure mathematics learned
about the value of the SVD as a tool, as an algorithm, and even as vocabulary for effective communication.  Gene with his \fbox{PROF SVD} (Figure~\ref{fig:profsvd}) and \fbox{DR SVD} California vanity license plates seemed always nearby when a field was starting to catch on.  Today the GSVD is as obscure as the SVD was in the early days.  We feel that the GSVD's time has come.  We would be
very pleased if one by one other fields would catch on. 

\begin{figure}
\centering
\includegraphics[width=.3\textwidth]{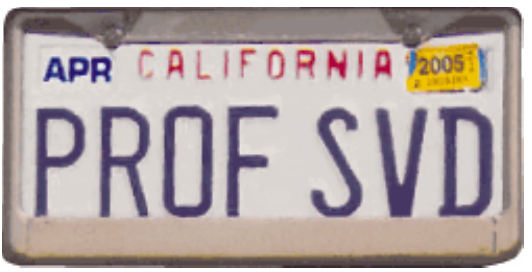}
\caption{The SVD was once an obscure theoretical tool, and now it is everywhere, in part due to the work of the late Gene Golub at Stanford University (Gene Golub's famous vanity license plate illustrated).  It is time for the GSVD to undergo the same
transformation.}
\label{fig:profsvd}
\end{figure}

\lstset{
basicstyle=\small\ttfamily,
columns=flexible,
breaklines=true
}

\begin{table}[ht]
\begin{tabular}{ m{2.5cm}| m{9cm}}
\toprule
 \textsc{language} & \textsc{GSVD documentation in the corresponding language}\\
\midrule
& \\
\textsc{matlab}
(R2018b) &
\url{https://www.mathworks.com/help/matlab/ref/gsvd.html} 
\begin{lstlisting}
[U,V,X,C,S] = gsvd(A,B) returns unitary matrices U and V, a (usually) square matrix X, and nonnegative diagonal matrices C and S so that

    A = U*C*X'
    B = V*S*X'
    C'*C + S'*S = I 

A and B must have the same number of columns, but may have different numbers of rows. If A is m-by-p and B is n-by-p, then U is m-by-m, V is n-by-n, X is p-by-q, C is m-by-q and S is n-by-q, where q = min(m+n,p).
The nonzero elements of S are always on its main diagonal. The nonzero elements of C are on the diagonal diag(C,max(0,q-m)). If m >= q, this is the main diagonal of C.
\end{lstlisting}\\
\midrule
\textsc{Mathematica} (11.3.0) &

\url{https://reference.wolfram.com/language/ref/SingularValueDecomposition.html} \newline {\color{brown}{\verb+>Details and Options+}}.

 \texttt{SingularValueDecomposition[{m,a}]} gives a list of matrices \texttt{\{\{u,ua\},\{w,wa\},v\}} such that \texttt{m} can be written as \texttt{u.w.Conjugate[Transpose[v]]} and \texttt{a} can be written as \texttt{ua.wa.Conjugate[Transpose[v]]}. \\
& \\ 

\midrule

\textsc{R} (geigen v2.2) & 

\url{https://www.rdocumentation.org/packages/geigen/versions/2.2/topics/GSVD}

The matrix $A$ is a $m$-by-$n$ matrix and the matrix $B$ is a $p$-by-$n$ matrix. This function decomposes both matrices; if either one is complex than the other matrix is coerced to be complex. The Generalized Singular Value Decomposition of numeric matrices $A$ and $B$ is given as

$$A = UD_1 [0\ R]Q',\quad\text{and}\quad B = VD_2[0\ R]Q',$$ where 
$U$ an $m\times m$ orthogonal matrix

$V$ an $p\times p$ orthogonal matrix

$Q$ an $n\times n$ orthogonal matrix

$R$ an $r$-by-$r$ upper triangular non singular matrix and the matrix $[0\ R]$ is an $r$-by-$n$ matrix. 

$D_1, D_2$ are quasi diagonal matrices and nonnegative and satisfy $D_1'D_1 + D_2'D_2 = I.$ $D_1$ is an $m$-by-$r$ matrix and $D_2$ is a $p$-by-$r$ matrix.

For details on this decomposition and the structure of the matrices $D_1$ and $D_2$. see \url{http://www.netlib.org/lapack/lug/node36.html}.\\





\bottomrule
\end{tabular}
\label{tab:GSVD}
\caption{The GSVD as portrayed in the documentation of most  technical computing languages seems unlikely to inspire the user unfamiliar with the GSVD.}
\end{table}

\begin{table}[ht]
\begin{center}
\begin{tabular}{ m{2.5cm}| m{9cm}}
\toprule
 \textsc{language} & \textsc{GSVD documentation in corresponding  language}\\
\midrule
 Julia 1.4 (and above) &  
\begin{lstlisting}
 svd(A, B) -> GeneralizedSVD
  Compute the generalized SVD of A and B, returning a GeneralizedSVD factorization object F such that [A;B] = [F.U * F.D1; F.V * F.D2] * F.R0 * F.Q'
    * U is a M-by-M orthogonal matrix,
    * V is a P-by-P orthogonal matrix,
    * Q is a N-by-N orthogonal matrix,
    * D1 is a M-by-(K+L) diagonal matrix with 1s in the first K entries,
    * D2 is a P-by-(K+L) matrix whose top right L-by-L block is diagonal,
    * R0 is a (K+L)-by-N matrix whose rightmost (K+L)-by-(K+L) block is nonsingular upper block triangular,
  K+L is the effective numerical rank of the matrix [A; B].
  Iterating the decomposition produces the components U, V, Q, D1, D2, and R0.
  The generalized SVD is used in applications such as when one wants to compare how much belongs to A vs. how much belongs to B, as in human vs yeast genome, or signal vs noise, or between clusters vs within clusters. (See Edelman and Wang for discussion: https://arxiv.org/abs/1901.00485)
  It decomposes [A; B] into [UC; VS]H, where [UC; VS] is a natural orthogonal basis for the column space of [A; B], and H = RQ' is a natural non-orthogonal basis for the rowspace of [A;B], where the top rows are most closely attributed to the A matrix, and the bottom to the B matrix. The multi-cosine/sine matrices C and S provide a multi-measure of how much A vs how much B, and U and V provide directions in which these are measured.
\end{lstlisting}
 \\
\bottomrule
\end{tabular}
\caption{Documentation in Julia 1.4 (and above) with the original pull request \url{https://github.com/JuliaLang/julia/pull/30239}.}
\label{tab:GSVD_julia}
\end{center}
\end{table}

\bibliographystyle{siamplain}
\bibliography{jacobibib}

\newpage
\begin{appendices}

\section{An In-depth Discussion of $U, V, C, S, H$}
\label{sec:appendix}
\subsection{\texorpdfstring{The square orthogonal matrices $U$ and $V$}{}}

The $U$ and $V$ matrices represent orthogonal bases for
$\R^{m_1}$ and $\R^{m_2}$ respectively. 

 One obtains an orthogonal basis for the column space of $A$ ($B$) by taking the columns of $U$ ($V$) corresponding to the non-zero rows of $C$ ($S$).  The remaining columns are an orthogonal basis for the left nullspace. (Recall, in the ordinary SVD, the ``$U$''  can be chosen to be square, and one  can divide ``$U$'' into the column space/left nullspace through  ``$\Sigma$'' in the analagous way.)
 
 We see in Section~\ref{sec:ellipse} that the columns of $U$ and the columns of $V$ may be thought of as semi-axes of ellipses (with the possibility of degenerate axes.)


\subsubsection{\texorpdfstring{The diagonal cosine and one-diagonal sine matrices: $C$ and $S$}{}}
\label{cands}

The cosines $1 \ge c_1 \ge \ldots \ge c_r \ge 0$ and
sines $0 \le s_1 \le \ldots \le s_r \le 1$ satisfy  $c_i^2+s_i^2=1$.
They represent the lengths of the semi-axes of our two ellipses.
The generalized singular values are the cotangents $\sigma_i = c_i/s_i$ which may be $0$ or infinite.  When $0 < \sigma_i < \infty$, we say that $\sigma_i$ is finite. 

As show in Figure~\ref{fig:CS}, the cosine matrix $C\in\R^{m_1, r}$ 
matrix puts the $c_i$ on the diagonal starting with $c_1$ in the (1,1) position. If we run out of room, by not having enough rows, we drop some of the $0$ cosines.

The sine matrix $S\in\R^{m_2, r}$ puts the $s_i$ on {\em some}
diagonal, and again if we run out of room, by not having enough rows, we drop some of the $0$ sines.  One convention (used by LAPACK \cite{anderson1999lapack}) 
puts all the positive $s_i$ in the top rows
by putting the positive diagonal in the top right corner. Another \cite[Eq. 2.3]{paige1981towards}  puts them in the bottom rows which as Paige and Saunders remark (and we agree) creates
\cite[p.401 top]{paige1981towards}:  an  ``easy [way] to remember [the] symmetry."

Either way $C'C$ and $S'S$ are square $r \times r$ diagonal with the $c_i^2$ and $s_i^2$ on the main diagonal and $C'C+S'S=I_r.$
The only difference between the two conventions is where the 
orthogonal basis for the column space of $B$ ends up in the columns of $V$, i.e., the left or right side.
(It is always those columns of $V$ that correspond to the rows where an $s_i>0$.)
When the significant elements of $S$ are on top, the column space basis is on the left like it is with $U$.  When it is on the bottom, one feels that
the $B$ is being treated as something of a ``mirror image" of $A$ with the column space basis on the right of $V$, and $S$ being something of a 180 degree rotation
(in structure) from $C$. 

\subsection{\texorpdfstring{The matrix $H$ that has no orthogonality or diagonal properties}{}}

On a first glance, no self-respecting  decomposition in the
SVD family should be neither diagonal nor orthogonal.
Nonetheless, all we can say about $H$ is that 
 $H\in\R^{r\times n}$ is a full row rank matrix whose rowspace is that of $[A;B]$.
 A very common case is $r=n$ in which case $H$ is square non-singular.
 
 In the same way that a vector is specified by its direction and length, 
 we think of $[A;B]$ as being specified by its column space and the rest of the information.  The matrix $H$ specifies the rest of the information.

Rowspace information is available in $H$.  The first $r_a$ rows of $H$ form a basis for the rowspace of $A$.  The last $r_b$ rows of $H$ form a basis for the rowspace of $B$.  The nullspaces are not as immediately available due to the non-orthogonality of $H$.  The nullspace of $H$ is the common nullspace of $A$ and $B$.
Of course one can use a QR decomposition.

\subsection{Compact Formats}
\label{compactformat}

One can optionally delete all the zero rows of $C$ or $S$ and the corresponding columns of $U$ and $V$.  This kills the left nullspace basis vectors, but preserves the column space vectors.

\subsection{Expanded Format}

When $r<n$ one can add $n-r$ zero columns to both $C$ and $S$ and expand $H$ to a full square non-singular matrix by adding any $n-r$ rows to $H$ that would make it invertible.
 
\subsection{Further reduction to Orthogonal and Triangular}
\label{orthtri}

The Expanded $H$ matrix can be written $[0 \  R]Q'$, where $R$ is triangular $\R^{r,r}$, and $Q$ is square orthogonal $\R^{n,n}$. In this case the initial $n-r$ columns of $Q$ are an orthogonal basis for the common nullspace of $A$ and $B$. 

\end{appendices}

\end{document}